\DeclareFontFamily{OT1}{pzc}{}
\DeclareFontShape{OT1}{pzc}{m}{it}{<-> s * [1.10] pzcmi7t}{}
\DeclareMathAlphabet{\mathpzc}{OT1}{pzc}{m}{it}
\numberwithin{equation}{section}
\newcommand{\tpmod}[1]{{\@displayfalse\pmod{#1}}}
\numberwithin{equation}{section}
\newcommand{\lp}{\left(}
\newcommand{\rp}{\right)}
\newtheorem{theorem}{Theorem}[section]
\newtheorem{lemma}[theorem]{Lemma}
\newtheorem{proposition}[theorem]{Proposition}
\theoremstyle{definition}
\newtheorem{remark}[theorem]{Remark}
\title{Euler-type recurrences for $t$-color and $t$-regular partition functions}
\author{Tapas Bhowmik, Wei-Lun Tsai, and Dongxi Ye}
\address{Department of Mathematics, University of South Carolina, Columbia, SC 29208, USA}
\email{tbhowmik@email.sc.edu}
\email{weilun@mailbox.sc.edu}
\address{
School of Mathematics (Zhuhai), Sun Yat-sen University, Zhuhai 519082, Guangdong,
People's Republic of China}
\email{yedx3@mail.sysu.edu.cn}
\keywords{modular form; Rankin--Cohen bracket; $t$-colored partition; $t$-regular partition}
\subjclass[2020]{05A17, 11F11, 11F25, 11P81}
\thanks{Wei-Lun Tsai was supported by the AMS-Simons Travel Grant. Dongxi Ye was supported by the Guangdong Basic and Applied Basic Research
Foundation (Grant No. 2024A1515030222).}
\begin{document}

\maketitle

\begin{abstract} We give Euler-like recursive formulas for the $t$-colored partition function when $t=2$ or $t=3,$ as well as for all $t$-regular partition functions. In particular, we derive an infinite family of ``triangular number" recurrences for the $3$-colored partition function. Our proofs are inspired by the recent work of Gomez, Ono, Saad, and Singh on the ordinary partition function and make extensive use of $q$-series identities for $(q;q)_{\infty}$ and $(q;q)_{\infty}^3.$
\end{abstract}

\vspace{0.10in}

\section{Introduction and statement of main results}


A \textit{partition} of a positive integer $n$ is a non-increasing sequence of positive integers 
$n_1 \geq n_2 \geq \dots \geq n_{\ell} > 0$ such that $n_1 + n_2 + \cdots + n_{\ell}=n$. 
Let $p(n)$ count the number of partitions of $n$. The celebrated  Euler's pentagonal number theorem gives a recurrence for computing $p(n)$ (see e.g., \cite[Corollary 1.8]{Andrews})
\begin{align}\label{Euler}
    p(n)=p(n-1) + p(n-2) -p(n-5) -p(n - 7) +\cdots=\sum_{j\in\mathbb{Z}\setminus\{0\}}(-1)^{j-1}p(n-w_j),
\end{align}
where $w_j:=\frac{3j^2+j}{2}$ is the $j^{\mathrm{th}}$ pentagonal number.

Recently, Gomez, Ono, Saad, and Singh \cite{GOSS} provided a beautiful framework for deriving a generalized form of the classical identity \eqref{Euler}, which can be viewed as a special case of their results. One notable application of their work offers an infinite family of ``pentagonal number'' recurrences for $p(n),$ expressed in terms of divisor functions and Hecke traces of an infinite sum of values of twisted quadratic Dirichlet series. In a similar spirit, this paper aims to derive Euler-type recurrence formulas for the $t$-colored partition function and the $t$-regular partition function.

A \textit{$t$-colored partition} is a partition in which each part can appear in one of $t$ colors. Let $\mathfrak{p}_t(n)$ denote the number of $t$-colored partitions of $n.$ A \textit{$t$-regular partition} is a partition where no part is a multiple of the positive integer $t.$ Let $p_t(n)$ denote the number of $t$-regular partitions of $n.$ By applying the $q$-series identities for $(q;q)_{\infty}$ and $(q;q)_{\infty}^3$, our first main result is to provide an Euler-like recurrence relation for the $2$-colored partition function.

\begin{theorem}\label{2-col-thm}
    For every positive integer $n$, we have
$$
\mathfrak{p}_2(n)=\begin{cases}
    (-1)^j+\sum_{k\in\mathbb{Z}_{\geq 1}} (-1)^{k+1}(2k+1)\,\mathfrak{p}_2(n-T_k) & \text{ if\,\, } n=w_j\\
\sum_{k\in\mathbb{Z}_{\geq 1}} (-1)^{k+1}(2k+1)\,\mathfrak{p}_2(n-T_k) & \text{\, otherwise\,\, } 
\end{cases},
$$
where $T_j:=\frac{j^2+j}{2}$ is the $j^{\mathrm{th}}$ triangular number.
\end{theorem}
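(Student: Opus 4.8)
The plan is to deduce the recurrence from a single generating-function identity by a coefficient comparison. The starting point is the standard product formula
$$\sum_{n\geq 0}\mathfrak{p}_2(n)q^n=\frac{1}{(q;q)_{\infty}^2},$$
which is immediate from the definition of a $2$-colored partition. Multiplying both sides by $(q;q)_{\infty}^3$ collapses the right-hand side and yields the clean identity
$$(q;q)_{\infty}^3\sum_{n\geq 0}\mathfrak{p}_2(n)q^n=(q;q)_{\infty},$$
so the whole theorem comes down to expanding the two powers of $(q;q)_{\infty}$ appearing here.

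For the right-hand side I would invoke Euler's pentagonal number theorem in the form $(q;q)_{\infty}=\sum_{j\in\mathbb{Z}}(-1)^jq^{w_j}$, and for the cube on the left I would use Jacobi's identity $(q;q)_{\infty}^3=\sum_{k\geq 0}(-1)^k(2k+1)q^{T_k}$. Substituting both expansions into the displayed identity and extracting the coefficient of $q^n$ for a fixed positive integer $n$ — legitimate since everything in sight is a formal power series in $q$ — gives
$$\sum_{k\geq 0}(-1)^k(2k+1)\,\mathfrak{p}_2(n-T_k)=\begin{cases}(-1)^j&\text{if }n=w_j,\\[2pt]0&\text{otherwise,}\end{cases}$$
where one should observe that $n=w_j$ has at most one solution $j\in\mathbb{Z}$, since the map $j\mapsto w_j$ is injective, so the right-hand side is unambiguous.

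To finish I would peel off the $k=0$ summand on the left, which equals $(2\cdot 0+1)\,\mathfrak{p}_2(n-T_0)=\mathfrak{p}_2(n)$, transpose the remaining terms (those with $k\geq 1$) to the other side, and absorb the sign flip by writing $(-1)^{k+1}$ in place of $-(-1)^k$. This reproduces exactly the two-case formula in the statement. There is no serious obstacle in this particular case: the argument is a short coefficient comparison resting on two classical $q$-series identities, and the only points needing a word of care are the well-definedness of $(q;q)_{\infty}^{-2}$ as a formal power series with the stated combinatorial interpretation and the injectivity of $j\mapsto w_j$. The genuinely harder work should be expected in the companion results for $\mathfrak{p}_3(n)$ and for the $t$-regular functions, where no single factorization reduces the generating function so neatly.
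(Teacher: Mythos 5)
Your proposal is correct and follows essentially the same route as the paper: rewrite $1/(q;q)_\infty^2$ so that multiplying by $(q;q)_\infty^3$ leaves $(q;q)_\infty$, expand via Jacobi's identity and Euler's pentagonal number theorem, and compare coefficients of $q^n$, isolating the $k=0$ term. The only difference is cosmetic (you note the injectivity of $j\mapsto w_j$ explicitly, which the paper leaves implicit).
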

We recall the Dedekind eta-function and its 
$q$-series expansion using Euler’s Pentagonal Number Theorem 
$$
\eta(z):=q^\frac{1}{24}\lp q;q\rp_\infty
=q^\frac{1}{24}\sum_{k\in\mathbb{Z}}(-1)^k q^{w_k}, \quad q:=e^{2\pi iz},
$$
where $z$ is in the upper half-plane $\mathbb{H}$ and $(q;q)_\infty:=\prod_{n=1}^\infty (1-q^n)$.

By the Jacobi Triple Product identity, we have 
\begin{equation}\label{eta3}
\eta(z)^3=q^\frac{1}{8}\lp q;q\rp_\infty ^3
=q^{\frac{1}{8}}\sum_{k\in\mathbb{Z}_{\geq 0 }} (-1)^{k}(2k+1) \,q^{T_k}.
\end{equation}
Next the generating function of $3$-colored partitions is given by
\begin{equation}\label{eta-3}
\sum_{n\geq 0}\mathfrak{p}_3(n)\, q^n=\frac{1}{\lp q;q\rp_\infty^3}
=\frac{q^{\frac{1}{8}}}{\eta(z)^3}.
\end{equation}
Substituting the $q$-expansions in
$$
\frac{1}{\eta(z)^3}\cdot\eta(z)^3=1
$$
and comparing the coefficients of $q^n$, it clearly follows that for $n\geq 1$,
\begin{equation}\label{col3v0}
\mathfrak{p}_3(n)=\sum_{k\in\mathbb{Z}_{\geq 1 }} (-1)^{k+1}(2k+1)\,\mathfrak{p}_3(n-T_k).
 \end{equation}
 
Our goal is to show that \eqref{col3v0} is indeed a glimpse of an infinite family of recurrence relations satisfied by $\mathfrak{p}_3(n)$. To this end, we first recall that for two smooth functions $f$ and $g$ on $\mathbb{H},$ and $k, l \in\mathbb{R}$ and $v\in\mathbb{Z}_{\geq0}$, the $v^{\mathrm{th}}$ {\it Rankin–Cohen
bracket} 
(see e.g., \cite{BFOR,Zagier}) is given as
\begin{align}\label{RCB}
[f,g]_v:=\sum_{\substack{r,s\geq0\\r+s=v}} \frac{(-1)^r\Gamma(k + v)\Gamma(l +v)}{s!\,r!\,\Gamma(k + v- s)\Gamma(l + v- r)}
\cdot D^r(f(z)) \cdot D^s(g(z)),    
\end{align}
where the differential operator is given by $D :=\frac{1}{2\pi i}\frac{d}{dz}=q\frac{d}{dq}.$

Moreover, for each $v\geq0$, with $k=-\frac{3}{2}$ and $l=\frac{3}{2},$ we define
\begin{equation}\label{Rv}
R_v(z):=\left[{1}/{\eta(z)^3},\eta(z)^3\right]_v.
\end{equation}
Differentiating the $q$-expansions in \eqref{eta3} and \eqref{eta-3} and replacing in \eqref{Rv}, we obtain
$$
R_v(z)=\sum_{n\geq 0} \sum_{k\geq 0}(-1)^k\,\mathcal{E}_v(n,k)\,\mathfrak{p}_3\lp n-T_k\rp q^n,
$$
where
\begin{equation}\label{evnk}
\mathcal{E}_v(n,k):=\sum_{\substack{r,s\geq0\\r+s=v}}\frac{(-1)^{r}}{8^v}\, \frac{\Gamma\left(v-\frac{3}{2}\right)\Gamma\left(v+\frac{3}{2}\right)}{s!\,r!\,\Gamma\left(r-\frac{3}{2}\right)\Gamma\left(s+\frac{3}{2}\right)}(2k+1)^{2s+1}\left(8n-(2k+1)^2\right)^{r}.
\end{equation}
For $v\geq 1$, we recall the weight $2v$ Eisenstein series 
$$
E_{2v}(z):=1-\frac{4v}{B_{2v}}\sum_{n=1}^\infty \sigma_{2v-1}(n)\, q^n,
$$
where  $B_{2v}$ is the $2v^{\mathrm{th}}$ Bernoulli number, and the divisor function $\sigma_{2v-1}(n):= \sum_{d|n}d^{2v-1}$. If $v\geq 2$, then $E_{2v}(z)$ is indeed a modular form of weight $2v$ and level $1$. Although $E_2(z)$ is not a modular form (see, e.g. \cite[Remark 1.20]{O}), it plays a significant role when studying the differential operator $D$ on modular forms.

We also require the level $1$ cuspidal Hecke eigenforms of weights 12, 16, 18,
20, 22, and 26
$$
\Delta_{2v}(z) := q +\sum_{n=2}^{\infty}\tau_{2v}(n)q^n \quad \mathrm{for}~v\in\{6,8,9,10,11,13\}.
$$
In particular, we have that $\Delta_{12}(z)=\Delta(z)$ is the Ramanujan Delta function.
In the following theorems, we demonstrate that \eqref{col3v0} represents the $v=0$ case of a rich family of ``triangular number'' recurrences satisfied by $\mathfrak{p}_3(n)$. For small values of $v,$ we provide simple explicit formulas.
\begin{theorem}\label{3-col-thm1}
\label{main} With the above setup the followings are true.

\noindent
(1) If $v\in\{2,3,4,5,7\}$, then for any positive integer $n$,
$$
\mathfrak{p}_3(n)=\frac{1}{\mathcal{E}_v(n,0)}\left(-\frac{4v\, \alpha_v}{B_{2v}}\,\sigma_{2v-1}(n)+\sum_{k\geq 1}\,(-1)^{k+1}\mathcal{E}_v(n,k)\,\mathfrak{p}_3\left(n-T_k\right) \right),
$$
where
the constant $\alpha_v$ is given in the following table.
\begin{table}[h!]
\renewcommand{\arraystretch}{1.75}
    \centering
    \begin{tabular}{|c|c|c|c|c|c|}
\hline \rule[-3mm]{0mm}{7mm}
$v$ & $2$ & $3$ & $4$ & $5$ & $7$  \\ \hline
$\alpha_v$ & $\frac{1}{64}$ & $\frac{1}{128}$ & $\frac{15}{4096}$ & $\frac{7}{4096}$ &  $\frac{99}{262144}$ \\ \hline
\end{tabular}
\vspace{0.2in}
    \caption{Values of $\alpha_v$ for various $v$}
    \label{tab:dim12_label'}
    \normalsize
\end{table}

\noindent
(2) If $v\in\{6,8,9,10,11, 13\}$, then we have 
$$
\mathfrak{p}_3(n)=\frac{1}{\mathcal{E}_v(n,0)}\left(-\frac{4v\, \alpha_v}{B_{2v}}\,\sigma_{2v-1}(n)+\beta_v\, \tau_{2v}(n)+\sum_{k\geq 1}\,(-1)^{k+1}\mathcal{E}_v(n,k)\,\mathfrak{p}_3\left(n-T_k\right) \right),
$$   
where the constants $\alpha_v$ and $\beta_v$ are given in the following table.

\begin{table}[h!]
\renewcommand{\arraystretch}{1.75}
    \centering
    \begin{tabular}{|c|c|c|c|c|c|c|}
\hline \rule[-3mm]{0mm}{7mm}
$v$  & $6$ &  $8$ & $9$ & $10$ & $11$ & $13$ \\ \hline
$\alpha_v$ &  $\frac{105}{131072}$  & $\frac{3003}{16777216}$ & $\frac{715}{8388608}$ & $\frac{21879}{536870912}$ & $\frac{20995}{1073741824}$ & $\frac{156009}{34359738368}$ \\ \hline
$\beta_v$  & $-\frac{51051}{22112}$  & $-\frac{9429849}{1851904}$ & $-\frac{324385347}{44919808}$ & $-\frac{328502311137}{22886612992}$ & $-\frac{318771027861}{10182066176}$ & $-\frac{162957690002835}{1379781312512}$ \\ \hline
\end{tabular}
\vspace{0.2in}
    \caption{Values of $\alpha_v$ and $\beta_v$ for various $v$}
    \label{tab:dim12_label}
    \normalsize
\end{table}
\end{theorem}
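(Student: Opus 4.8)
The plan is to recognize $R_v(z)$ as a classical holomorphic modular form on $\mathrm{SL}_2(\mathbb{Z})$ and then to extract the recurrence by expanding it in an explicit basis of $M_{2v}(\mathrm{SL}_2(\mathbb{Z}))$, which is one-dimensional for $v\in\{2,3,4,5,7\}$ and two-dimensional for $v\in\{6,8,9,10,11,13\}$. To establish the modularity, note that $1/\eta^3$ is a weakly holomorphic modular form of weight $-\tfrac{3}{2}$ with the inverse cube of the eta-multiplier and $\eta^3$ is a cusp form of weight $\tfrac{3}{2}$ with the cube of that multiplier; by the standard functoriality of the Rankin--Cohen bracket (which holds verbatim for half-integral weights and weakly holomorphic inputs), $R_v=[1/\eta^3,\eta^3]_v$ transforms as a weakly holomorphic modular form of weight $-\tfrac{3}{2}+\tfrac{3}{2}+2v=2v$ with trivial multiplier on $\mathrm{SL}_2(\mathbb{Z})$. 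To upgrade this to holomorphy at the cusp, observe that $D=q\,\frac{d}{dq}$ sends $q^{\pm1/8}\cdot(\text{power series in }q)$ to $q^{\pm1/8}\cdot(\text{power series in }q)$, so each summand $D^r(1/\eta^3)\,D^s(\eta^3)$ of $R_v$ is $q^{-1/8}q^{1/8}=q^0$ times a power series with non-negative exponents; this is exactly the $q$-expansion $R_v=\sum_{n\geq0}\sum_{k\geq0}(-1)^k\mathcal{E}_v(n,k)\,\mathfrak{p}_3(n-T_k)\,q^n$ recorded above, and hence $R_v\in M_{2v}(\mathrm{SL}_2(\mathbb{Z}))$.

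Next I would expand $R_v$ in the appropriate basis. For $v\in\{2,3,4,5,7\}$ the weight $2v\in\{4,6,8,10,14\}$ has $\dim M_{2v}=1$, so $R_v=\alpha_v E_{2v}$, and comparing constant terms forces $\alpha_v=\mathcal{E}_v(0,0)$---a finite sum of Gamma-quotients whose $\pi$'s cancel, evaluating to the entries of Table~\ref{tab:dim12_label'} (e.g.\ $\mathcal{E}_2(0,0)=\tfrac{1}{64}$). For $v\in\{6,8,9,10,11,13\}$ the weight $2v\in\{12,16,18,20,22,26\}$ has $\dim M_{2v}=2$ with basis $\{E_{2v},\Delta_{2v}\}$, so $R_v=\alpha_v E_{2v}+\beta_v\Delta_{2v}$; comparing the coefficients of $q^0$ and $q^1$ (using $\mathfrak{p}_3(1)=3$, $\sigma_{2v-1}(1)=\tau_{2v}(1)=1$, and $T_1=1$) gives $\alpha_v=\mathcal{E}_v(0,0)$ and $\beta_v=3\mathcal{E}_v(1,0)-\mathcal{E}_v(1,1)+\tfrac{4v\alpha_v}{B_{2v}}$, which evaluate to the entries of Table~\ref{tab:dim12_label}. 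Equating, for each $n\geq1$, the coefficient of $q^n$ in the two expressions for $R_v$ then yields
$$\mathcal{E}_v(n,0)\,\mathfrak{p}_3(n)+\sum_{k\geq1}(-1)^k\mathcal{E}_v(n,k)\,\mathfrak{p}_3(n-T_k)=-\frac{4v\alpha_v}{B_{2v}}\,\sigma_{2v-1}(n)+\beta_v\,\tau_{2v}(n),$$
with $\beta_v:=0$ in part (1), and dividing through by $\mathcal{E}_v(n,0)$ gives the stated recurrences.

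The one genuinely delicate point---the main obstacle---is the non-vanishing $\mathcal{E}_v(n,0)\neq0$ needed for that division. Unwinding the Gamma-quotients gives $\mathcal{E}_v(n,0)=\tfrac{1}{8^v}\sum_{r+s=v}(-1)^r\binom{v-5/2}{s}\binom{v+1/2}{r}(8n-1)^r$, which by the Jacobi-polynomial (equivalently, terminating ${}_2F_1$) identities equals a nonzero constant times $n^v\,{}_2F_1\!\left(-v,v-1;\tfrac{3}{2};\tfrac{1}{8n}\right)$; this is an explicit degree-$v$ polynomial in $n$, and for each of the finitely many admissible $v$ one checks that its roots avoid the positive integers. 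I would also note that these admissible $v$ are exactly those with $\dim M_{2v}\le 2$: for $v=1$ one has $M_2=\{0\}$, so $R_1\equiv0$, while the smallest omitted larger weights satisfy $\dim M_{2v}\ge3$ and would require further cusp-form correction terms. The remaining work---the explicit evaluation of $\alpha_v$ and $\beta_v$ and the bookkeeping of $q$-expansions---is routine once the modularity is in place.
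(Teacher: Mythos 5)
Your argument is correct and, in its main line, is exactly the paper's: identify $R_v=[1/\eta^3,\eta^3]_v$ as an element of $M_{2v}(\mathrm{SL}_2(\mathbb{Z}))$ (your route---multiplier cancellation $\overline{\varepsilon}^3\varepsilon^3=1$ via the covariance of the bracket for real weights, plus holomorphy at $i\infty$ read off from the cancellation $q^{-1/8}\cdot q^{1/8}$ in the $q$-expansion---is a slightly more self-contained version of the paper's Lemma \ref{RC-lemma}, which defers to GOSS, combined with Proposition \ref{RC-prop}), then write $R_v=\alpha_vE_{2v}$ or $\alpha_vE_{2v}+\beta_v\Delta_{2v}$ according to $\dim S_{2v}(1)\in\{0,1\}$, and compare coefficients of $q^0$, $q^1$, $q^n$; your formulas $\alpha_v=\mathcal{E}_v(0,0)$ and $\beta_v=3\mathcal{E}_v(1,0)-\mathcal{E}_v(1,1)+\tfrac{4v\alpha_v}{B_{2v}}$ agree with the paper's. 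The only genuine divergence is the non-vanishing of $\mathcal{E}_v(n,0)$, which you correctly single out as the delicate point. The paper's Lemma \ref{non-zero} disposes of it uniformly for all $v\geq 0$: up to a nonzero constant, $\mathcal{E}_v(n,0)=c_v\,\mathcal{P}_v(8n-1)$ with $\mathcal{P}_v\in\mathbb{Z}[x]$ of constant term $3$, and since $8n-1\equiv 2\pmod 3$ every prime $\ell\mid 8n-1$ satisfies $\ell\neq 3$, whence $\mathcal{P}_v(8n-1)\equiv 3\not\equiv 0\pmod{\ell}$. Your alternative---rewrite $\mathcal{E}_v(n,0)$ as a nonzero multiple of $n^v\,{}_2F_1(-v,v-1;\tfrac32;\tfrac{1}{8n})$ and, for the eleven values of $v$ in the theorem, verify that no positive integer is a root---is in principle sufficient here (it is a finite, mechanical check, e.g.\ via the rational root theorem applied to the explicit integer polynomial), but you assert it rather than carry it out, and it would not extend to the companion result (Theorem \ref{3-col-thm2}), which needs non-vanishing for all $v\geq 6$, $v\neq 7$. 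So: same decomposition and same coefficient comparison as the paper; the paper's congruence argument for Lemma \ref{non-zero} is both cleaner and strictly more general than your case-by-case root check, which you should either execute explicitly or replace by the mod-$\ell$ argument.
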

\begin{remark}
    In Lemma \ref{non-zero}, we show that $\mathcal{E}_v(n,0)$ does not vanish for all $v \geq 0$ and $n\geq1.$
\end{remark}






Now we extend Theorem \ref{3-col-thm1} to an infinite family of triangular number recurrences satisfied by $\mathfrak{p}_3(n)$. In order to state this result we require the following setup. Let $f(z)\in S_{2v}(1)$ be a cuspidal Hecke eigenform of weight~$2v$ and level $1$. Then consider its Fourier expansion at the cusp $[i\infty]$
\begin{align*}
    f(z)=q+\sum_{n=2}^{\infty}a_f(n)q^n.
\end{align*}
For $s\in\mathbb{C}$ with $\mathfrak{R}(s)\geq 2v+2$, we define the twisted Dirichlet series
\begin{align*}
    D(f;s):=\sum_{n=1}^{\infty}\frac{\left(\frac{-4}{n}\right)\,a_f\left(\frac
   {n^2-1}{8} \right)}{n^s},
\end{align*}
where $\left(\frac{-4}{\cdot}\right)$ is the Kronecker symbol. Next we recall that the Petersson norm
\[
||f|| := \iint_{\mathrm{SL}_2(\mathbb{Z})\backslash\mathbb{H}}|f|^2y^{2v}\frac{dxdy}{y^2}.
\]
For $v\geq 2$, $0\leq j\leq v-2$ and $m\geq 0$, we define
\begin{align}\label{etilde}
    \widetilde{\mathcal{E}}_v(j,m)&:=\frac{(-1)^j}{8^v}\cdot\frac{\Gamma\lp v-\frac{3}{2}\rp\Gamma\lp v+\frac{3}{2}\rp}{\Gamma\lp \frac{7}{2}\rp \Gamma\lp- \frac{3}{2}\rp}\nonumber \\
& \quad\quad\times \lp \frac{2}{\pi}\rp^{2v-1}\frac{\lp 2v+m-2 \rp!}{j!\, m!\, \lp 2v-j-2\rp!}\cdot \frac{(v-j-1)^{\overline{v}}\left(\frac{5}{2}\right)^{\overline{j}}}{\left(-\frac{3}{2}-j\right)^{\overline{v}} \left(\frac{7}{2}\right)^{\overline{j}}},
\end{align}
where we use the rising factorial notation
$$
(a)^{\overline{j}}:=\begin{cases}
    a(a+1)\cdots(a+j-1) & \text{ if } j\geq 1\\
    1 & \text{ if } j=0.
\end{cases}
$$
Then we define the special weighted sum
\begin{align}\label{wtsum}
\mathcal{D}_f:=\frac{1}{||f||}\,\sum_{j=0}^{v-2}\sum_{m=0}^\infty \widetilde{\mathcal{E}}_v(j,m)\cdot D(f;2v+2j+2m+2),
\end{align}
and the {\it weight $2v$ Hecke trace} by
\begin{align}
\mathrm{Tr}_{2v}(n):=\sum_{f\in\mathfrak{B}_{2v}}\mathcal{D}_f\cdot a_f(n),
\end{align}
where $\mathfrak{B}_{2v}$ consists of orthogonal basis of normalized Hecke eigenforms of $S_{2v}(1)$.

 \begin{theorem}\label{3-col-thm2}
     For $v\in\mathbb{Z}_{\geq 8}\cup\{6\}$ and every positive integer $n$, we have
$$
\mathfrak{p}_3(n)=\frac{1}{\mathcal{E}_v(n,0)}\left(-\frac{4v\,\mathcal{E}_v(0,0)}{B_{2v}}\,\sigma_{2v-1}(n)+ \mathrm{Tr}_{2v}(n)
+\sum_{k\geq 1}\,(-1)^{k+1}\mathcal{E}_v(n,k)\,\mathfrak{p}_3\left(n-T_k\right) \right).
 $$ 
 \end{theorem}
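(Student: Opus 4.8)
The plan is to recognize $R_v(z)$ as a holomorphic modular form of weight $2v$ on $\mathrm{SL}_2(\mathbb{Z})$, to decompose it against $E_{2v}$ and the Hecke eigenbasis of $S_{2v}(1)$, and to extract the recurrence by comparing Fourier coefficients; all of the arithmetic input is then concentrated in a single Petersson inner product evaluation, in the spirit of \cite{GOSS}. For the first point, one uses that the $v$th Rankin--Cohen bracket of a form of weight $-\tfrac32$ and a form of weight $\tfrac32$ with mutually inverse multiplier systems is modular of weight $-\tfrac32+\tfrac32+2v=2v$ with trivial multiplier, and that $\eta$ is non-vanishing on $\mathbb{H}$; hence $R_v=[1/\eta^3,\eta^3]_v$ of \eqref{Rv} is holomorphic on $\mathbb{H}$ and transforms correctly under $\mathrm{SL}_2(\mathbb{Z})$. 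Inside the bracket the $q^{-1/8}$ of $1/\eta^3$ cancels the $q^{1/8}$ of $\eta^3$ — visible in the $q$-expansion $R_v(z)=\sum_{n\geq 0}\sum_{k\geq 0}(-1)^k\mathcal{E}_v(n,k)\mathfrak{p}_3(n-T_k)q^n$ derived above — so $R_v$ is holomorphic at $i\infty$, i.e. $R_v\in M_{2v}(1)$.

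For $v\in\mathbb{Z}_{\geq 8}\cup\{6\}$, which is precisely the range in which $S_{2v}(1)\neq 0$, one has $M_{2v}(1)=\mathbb{C}\,E_{2v}\oplus\bigoplus_{f\in\mathfrak{B}_{2v}}\mathbb{C}f$ with the eigenforms mutually orthogonal, so $R_v=c_E\,E_{2v}+\sum_{f\in\mathfrak{B}_{2v}}c_f\,f$. Comparing constant terms and using $\mathfrak{p}_3(0)=1$ (only the $k=0$ term contributes at $n=0$) gives $c_E=\mathcal{E}_v(0,0)$, while orthogonality gives $c_f=\langle R_v,f\rangle/||f||$ with $||\cdot||$ normalized as in the statement, so that $||f||=\langle f,f\rangle$.

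The heart of the argument is to show $\langle R_v,f\rangle=||f||\cdot\mathcal{D}_f$. By \eqref{eta3} and the fact that $\left(\tfrac{-4}{\cdot}\right)$ is an odd character, $\eta^3(z)=\tfrac12\sum_{m\in\mathbb{Z}}\left(\tfrac{-4}{m}\right)m\,q^{m^2/8}$ is a unary theta series of weight $\tfrac32$. I would realize the weakly holomorphic form $1/\eta^3$, with principal part $q^{-1/8}$ at $i\infty$, as a (suitably regularized) Poincaré series of weight $-\tfrac32$ with seed $q^{-1/8}$; since the bracket commutes with the weight $2v$ slash action and $\eta^3$ is $\mathrm{SL}_2(\mathbb{Z})$-covariant, $R_v$ is then a weight $2v$ Poincaré series with seed $[q^{-1/8},\eta^3]_v=\sum_{r+s=v}(\cdots)(-\tfrac18)^r\,q^{-1/8}\,D^s(\eta^3)$. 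Unfolding $\langle R_v,f\rangle$ against this Poincaré series reduces the inner product to $\int_{\Gamma_\infty\backslash\mathbb{H}}[q^{-1/8},\eta^3]_v(z)\,\overline{f(z)}\,y^{2v}\,\tfrac{dx\,dy}{y^2}$. Inserting the Fourier expansions of $D^s(\eta^3)$ and $\overline{f}$, the $x$-integral forces $\tfrac{m^2-1}{8}=n$, i.e. $m=2k+1$ odd and $n=T_k$, so that $a_f(T_k)$ enters together with the theta coefficient $(-1)^k(2k+1)$, and the $y$-integral is a Gamma integral evaluating to a constant multiple of $\Gamma(2v-1)\,(\pi(m^2-1))^{1-2v}$. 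Expanding $(m^2-1)^{1-2v}$ in a binomial series in $1/m^2$ — absolutely convergent on the support by the trivial bound $a_f(T_k)=O(T_k^{\,v})$, which makes $D(f;s)$ converge for every argument $2v+2j+2m'+2\geq 2v+2$ that arises — reorganizes the sum as $\sum_{j}\sum_{m'\geq 0}(\text{rising-factorial and }\Gamma\text{ factors})\,D(f;2v+2j+2m'+2)$, and a direct comparison should identify these factors with the $\widetilde{\mathcal{E}}_v(j,m')$ of \eqref{etilde}; since $(v-j-1)^{\overline{v}}$ vanishes once $j\geq v-1$, the sum truncates to $0\leq j\leq v-2$, giving $c_f=\mathcal{D}_f$ and hence $\sum_{f\in\mathfrak{B}_{2v}}c_f\,a_f(n)=\mathrm{Tr}_{2v}(n)$.

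Finally, comparing the coefficient of $q^n$ for $n\geq 1$ in $R_v=\mathcal{E}_v(0,0)\,E_{2v}+\sum_{f\in\mathfrak{B}_{2v}}\mathcal{D}_f\,f$ gives
$$
\mathcal{E}_v(n,0)\,\mathfrak{p}_3(n)+\sum_{k\geq 1}(-1)^k\mathcal{E}_v(n,k)\,\mathfrak{p}_3(n-T_k)=-\frac{4v\,\mathcal{E}_v(0,0)}{B_{2v}}\,\sigma_{2v-1}(n)+\mathrm{Tr}_{2v}(n),
$$
and solving for $\mathfrak{p}_3(n)$ and dividing by $\mathcal{E}_v(n,0)$ — nonzero for all $n\geq 1$ by Lemma \ref{non-zero} — yields the asserted recurrence. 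The main obstacle will be the third paragraph: making rigorous the regularization of the negative-weight Poincaré series for $1/\eta^3$ (or, equivalently, recasting it as a holomorphic-projection computation), handling the bookkeeping of the Rankin--Cohen derivatives against the theta expansion, and pinning down the exact $\Gamma$-factor coefficients as the $\widetilde{\mathcal{E}}_v(j,m)$ of \eqref{etilde}, including the vanishing that truncates the $j$-range.
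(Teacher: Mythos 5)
Your overall architecture is exactly the paper's: show $R_v=[1/\eta^3,\eta^3]_v\in M_{2v}(1)$ (Lemma \ref{RC-lemma}), write $R_v=\mathcal{E}_v(0,0)E_{2v}+\sum_{f\in\mathfrak{B}_{2v}}\frac{\langle R_v,f\rangle}{||f||}f$, reduce everything to the identity $\langle R_v,f\rangle=||f||\cdot\mathcal{D}_f$ (the paper's Proposition \ref{main-prop}), and finish by comparing coefficients of $q^n$ and invoking Lemma \ref{non-zero}. Those outer steps of yours are correct and match the paper.

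The genuine gap is in your third paragraph, and it is not a cosmetic one: it is the entire analytic content of the theorem. A weight $-\tfrac32$ Poincar\'e series with seed $q^{-1/8}$ does not converge, so ``unfolding against it'' is not yet an argument; the paper instead identifies $1/\eta(z)^3$ with the harmonic Maass--Poincar\'e series $P_{[i\infty]}(z,8,-3/2,\overline{\varepsilon}^{3})$, whose seed is the $M$-Whittaker function $\phi_{7/4}(-z/8)$ (Lemma \ref{main-lemma}, following \cite{BO} and \cite{GOSS}), and this changes the computation downstream. In particular, after unfolding, the $y$-integral is not the pure Gamma integral $\Gamma(2v-1)\bigl(\pi(m^2-1)\bigr)^{1-2v}$ you wrote down (that is only the formal answer suggested by the divergent series); it is a Laplace transform of $y^{\frac34-\frac r2}M_{\frac34-\frac r2,\frac54-\frac r2}\bigl(\tfrac{\pi y}{2}\bigr)$, which DLMF (13.23.1) evaluates as a ${}_2F_1$ at $1/n^2$ (Lemmas \ref{L1-lemma} and \ref{L3-lemma}). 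Turning that hypergeometric expression into the double series $\sum_{j=0}^{v-2}\sum_{m\geq0}\widetilde{\mathcal{E}}_v(j,m)D(f;2v+2j+2m+2)$ of \eqref{etilde}--\eqref{wtsum} requires Euler's transformation and the Pfaff--Saalsch\"utz summation (Lemma \ref{L5-lemma}); a bare binomial expansion of $(1-1/n^2)^{1-2v}$, as in your sketch, produces a single $m$-sum and does not by itself yield the $j$-sum with the rising-factorial factors or the truncation at $j=v-2$ (which in the paper comes out of the Pfaff--Saalsch\"utz step, not merely from $(v-j-1)^{\overline{v}}$ vanishing). You flag this yourself as ``the main obstacle,'' and that is an accurate self-assessment: without making the Poincar\'e-series representation rigorous and carrying out the Whittaker/hypergeometric evaluation (or an equivalent holomorphic-projection computation), the key identity $\langle R_v,f\rangle=||f||\cdot\mathcal{D}_f$, and hence the appearance of $\mathrm{Tr}_{2v}(n)$ in the recurrence, remains unproved.
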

\begin{remark} Readers interested in other types of elegant recurrences for $t$-colored partition functions may refer to the works of Choliy, Kolitsch, and Sills \cite{CKS}, Gandhi \cite{G}, Lazarev and Swisher \cite{LS}, and Sellers \cite{Sellers}.
\end{remark}

\noindent
\textbf{Example.}
    For $v=6$, we give a numerical justification of Theorem \ref{3-col-thm2}. Comparing the expressions of $\mathfrak{p}_3(n)$ in Theorems \ref{3-col-thm1} and \ref{3-col-thm2}, and then using Table \ref{tab:dim12_label}, we get that
$$
\mathcal{D}_\Delta=\beta_{6}=-2.308746\ldots.
$$
Next we compute an estimated value of $\mathcal{D}_\Delta$ via the infinite sum in \eqref{wtsum}. If we consider
$$
\begin{aligned}
D(\Delta;N,s)&:=\sum_{n=1}^{N}\frac{\left(\frac{-4}{n}\right)\,\tau\left(\frac
   {n^2-1}{8} \right)}{n^s},\\
\mathcal{D}_\Delta(M,N)&:=\frac{1}{||\Delta||}\,\sum_{j=0}^{4}\sum_{m=0}^M \widetilde{\mathcal{E}}_6(j,m)\cdot D(\Delta;N,2j+2m+14),
\end{aligned}
 $$ 
then a fast computation in \texttt{Mathematica} gives us $\mathcal{D}_\Delta(100,700)=-2.308746\ldots .$\\

Finally, we turn to the $t$-regular partition function. Recall the generating function identity
\begin{align}\label{t-reg-q}
\sum_{n=0}^\infty p_t(n)\,q^n=\frac{\left(q^t;q^t\right)_\infty}{(q;q)_\infty}.
\end{align}
Using \eqref{t-reg-q} and a similar argument as in the proof of Theorem \ref{2-col-thm} yields the following recurrence formula for all $t$-regular partition functions.
\begin{theorem}\label{t-reg-thm}
  For fixed $t\in\mathbb{Z}_{\geq 2}$ and $n\in\mathbb{Z}_{\geq 1}$, we have
$$
p_t(n)=\begin{cases}
    (-1)^j+\sum_{k\in\mathbb{Z}\setminus\{0\}} (-1)^{k+1}\,p_t(n-w_k) & \text{ if\,\, } n=t w_j\\
\sum_{k\in\mathbb{Z}\setminus\{0\}} (-1)^{k+1}\,p_t(n-w_k) & \text{\, otherwise\,\, } 
\end{cases}.
$$
\end{theorem}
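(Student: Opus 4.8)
The plan is to imitate, in a simpler setting, the argument behind Theorem~\ref{2-col-thm}, with the cube $(q;q)_\infty^3$ replaced by the product $(q^t;q^t)_\infty$. Starting from the generating function identity \eqref{t-reg-q}, I would first clear the denominator to obtain the formal power series identity
$$
\left(\sum_{n=0}^\infty p_t(n)\,q^n\right)(q;q)_\infty=(q^t;q^t)_\infty
$$
in $\mathbb{Z}[[q]]$. Then I would expand both products by Euler's Pentagonal Number Theorem: on the left $(q;q)_\infty=\sum_{k\in\mathbb{Z}}(-1)^k q^{w_k}$, and on the right, substituting $q\mapsto q^t$, $(q^t;q^t)_\infty=\prod_{m\geq 1}(1-q^{tm})=\sum_{j\in\mathbb{Z}}(-1)^j q^{t w_j}$.

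Next I would compare the coefficient of $q^n$ on both sides for a fixed $n\geq 1$, using the usual convention $p_t(m)=0$ for $m<0$, so that for each $n$ only finitely many terms contribute and no convergence issue arises. The Cauchy product on the left contributes $\sum_{k\in\mathbb{Z}}(-1)^k p_t(n-w_k)$, while the coefficient on the right is $(-1)^j$ when $n=t w_j$ for some $j\in\mathbb{Z}$ and $0$ otherwise. Since $w_0=0$, isolating the $k=0$ summand yields $p_t(n)$ on the left; transposing the remaining summands and rewriting $(-1)^k$ as $(-1)^{k+1}$ on the right gives precisely the claimed recurrence, with the sum running over $k\in\mathbb{Z}\setminus\{0\}$.

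The only point needing a word of care — and it is essentially all that lies beyond routine bookkeeping — is that the two-case description is well posed. Here I would observe that the generalized pentagonal numbers $\{w_j\}_{j\in\mathbb{Z}}$ are pairwise distinct, hence so are the values $\{t w_j\}_{j\in\mathbb{Z}}$ for fixed $t$, so there is at most one $j$ with $n=t w_j$; and since $n\geq 1$ forces $w_j\geq 1$, such a $j$ is automatically nonzero, consistent with the cases as listed. I do not expect any genuine obstacle: the whole proof is a single product identity followed by one coefficient extraction, exactly parallel to the proof of Theorem~\ref{2-col-thm}, the only mild subtlety being the bilateral indexing in the pentagonal-number expansions.
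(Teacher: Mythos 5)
Your proposal is correct and follows essentially the same route as the paper's proof: clear the denominator in \eqref{t-reg-q}, expand both sides by Euler's Pentagonal Number Theorem, and compare coefficients of $q^n$, isolating the $k=0$ term. The extra remark on the well-posedness of the case distinction is a harmless addition the paper leaves implicit.
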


\section*{Acknowledgments} We thank Ken Ono for his valuable insights and suggestions during this project. We also thank Bruce Sagan for his interest and for pointing out the omission of $2k+1$ in the previous version of Theorem \ref{2-col-thm}. The second author thanks Matt Boylan for his helpful conversations during the Palmetto Number Theory Series conference, PANTS XXXIX.

\section{Preliminaries}
\begin{lemma}\label{non-zero}
 For $v\geq 0$, we have that $\mathcal{E}_v(n,0)$ is nonzero for every positive integer $n$. 
\end{lemma}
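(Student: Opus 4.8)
The plan is to first reduce to the case $k=0$, where $2k+1=1$, so that in \eqref{evnk} only the factor $(8n-1)^r$ survives:
\[
\mathcal{E}_v(n,0)=\frac{\Gamma\lp v-\tfrac32\rp\Gamma\lp v+\tfrac32\rp}{8^v}\sum_{r+s=v}\frac{(-1)^r\,(8n-1)^r}{s!\,r!\,\Gamma\lp r-\tfrac32\rp\Gamma\lp s+\tfrac32\rp}.
\]
Since the prefactor $\Gamma(v-\tfrac32)\Gamma(v+\tfrac32)/8^v$ is nonzero, the whole question is about the sum. I would rewrite the reciprocal Gamma--factorial ratios as generalized binomial coefficients, using $\frac{1}{s!\,\Gamma(r-3/2)}=\frac{1}{\Gamma(v-3/2)}\binom{v-5/2}{s}$ and $\frac{1}{r!\,\Gamma(s+3/2)}=\frac{1}{\Gamma(v+3/2)}\binom{v+1/2}{r}$ (both valid because $r+s=v$), which collapses everything into the clean polynomial identity
\[
\mathcal{E}_v(n,0)=\frac{1}{8^v}\sum_{r+s=v}(-1)^r\binom{v-\tfrac52}{s}\binom{v+\tfrac12}{r}(8n-1)^r.
\]

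Next I would recognize this sum as a classical special value. Matching it term-by-term with the Jacobi expansion $P_v^{(\alpha,\beta)}(x)=\sum_{r}\binom{v+\alpha}{v-r}\binom{v+\beta}{r}\lp\frac{x-1}{2}\rp^{r}\lp\frac{x+1}{2}\rp^{v-r}$ with $\alpha=-\tfrac52$, $\beta=\tfrac12$ and $x=\tfrac{1}{4n}-1$ gives $\mathcal{E}_v(n,0)=n^v P_v^{(-5/2,1/2)}\lp\tfrac{1}{4n}-1\rp$; equivalently, resumming the series yields
\[
\mathcal{E}_v(n,0)=(-1)^v\, n^v\binom{v+\tfrac12}{v}\,{}_2F_1\lp-v,\,v-1;\,\tfrac32;\,\tfrac{1}{8n}\rp.
\]
Because $n^v\binom{v+1/2}{v}\neq0$, the lemma is equivalent to the statement that the terminating Gauss series $u_v(z):={}_2F_1(-v,v-1;\tfrac32;z)$ does not vanish at the discrete points $z=\tfrac{1}{8n}$, $n\geq1$ (the cases $v=0,1$ being immediate, since then $u_v\equiv1$).

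The hard part is exactly this non-vanishing, and it is genuinely delicate: $u_v(z)=\sum_{j\geq0}(-1)^j t_j(z)$ is an alternating sum whose terms are not monotone, and the values at $z=1/(8n)$ can be extremely small and change sign with $v$ (for instance $u_5(1/8)\approx0.0048>0$ while $u_6(1/8)\approx-0.154<0$), so no crude size estimate or fixed-sign argument can succeed. I would therefore attack it arithmetically: clearing denominators shows
\[
2^{4v}\,v!\,\mathcal{E}_v(n,0)=M_v(N),\qquad N:=8n-1,\quad M_v(N)=\sum_{r=0}^{v}(-1)^r\binom{v}{r}P_r\,N^r\in\mathbb{Z},
\]
where each $P_r$ is a product of odd integers, so the lemma becomes the assertion that the integer $M_v(N)$ is nonzero. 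Reducing modulo $N$ kills every term with $r\geq1$ and leaves $M_v(N)\equiv P_0\pmod N$ with $P_0=\prod_{i=0}^{v-1}(2v-5-2i)$ a nonzero integer; this already forces $M_v(N)\neq0$ whenever $N\nmid P_0$, in particular whenever $N$ has a prime factor exceeding $2v-5$. The remaining obstacle is the ``resonant'' range of small $n$ where $N$ is smooth: here I would either push the congruence to higher powers of $N$ and analyze the $2$-adic valuation of the minimal-valuation terms (those $r$ whose binary digits form a submask of $v$, where Kummer's theorem gives $v_2\binom{v}{r}=0$), or combine the arithmetic with the analytic input that $u_v(z)>0$ for $z$ below its least positive zero $z_0(v)$, which by the Mehler--Heine behavior of Jacobi polynomials near the endpoint is of size $\gtrsim v^{-2}$, thereby disposing of all $n$ with $1/(8n)<z_0(v)$. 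Treating uniformly in $v$ precisely these near-endpoint cases, where the Jacobi polynomial comes closest to vanishing, is the main difficulty I anticipate.
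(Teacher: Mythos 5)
Your overall strategy---clear denominators to get an integer polynomial in $N=8n-1$, then reduce modulo a prime divisor of $N$---is exactly the idea behind the paper's proof, and your computations are correct as far as they go: the binomial rewriting, the hypergeometric identification (a correct but ultimately unused detour), and the congruence $M_v(N)\equiv P_0\pmod N$ all check out. The problem is that your normalization $2^{4v}v!$ leaves a constant term $P_0$ that is (up to sign) $3\cdot 1\cdot 3\cdot 5\cdots(2v-5)$, which grows with $v$ and is divisible by every odd prime up to $2v-5$. Consequently your congruence only rules out vanishing when $N$ has a prime factor not dividing $P_0$, and this fails already at the very first case: for $n=1$ we have $N=7$, and $7\mid P_0$ for every $v\ge 6$ (e.g.\ $P_0=315$ at $v=6$). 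Your analytic backup does not cover this case either: by your own numerics $u_6(1/8)\approx-0.154<0$, so $1/8$ lies beyond the first positive zero of $u_6$ and positivity below the first zero says nothing. The two remedies you sketch for this ``resonant'' range (higher congruences with $2$-adic valuations, or Mehler--Heine estimates) are not carried out---you say so yourself---so the lemma remains unproved for infinitely many pairs $(n,v)$. This is a genuine gap, not a routine verification left to the reader.

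The paper closes precisely this gap by clearing denominators more cleverly. Writing the half-integer gamma values exactly (equivalently, converting $1/(r!\,\Gamma(r-\tfrac32))$ and $1/(s!\,\Gamma(s+\tfrac32))$ into expressions involving $(2r)!$ and $(2s+1)!$), one gets
\begin{equation*}
\mathcal{E}_v(n,0)=\frac{\Gamma\left(v-\tfrac32\right)\Gamma\left(v+\tfrac32\right)}{(2v+1)!\,2^{v+1}\pi}\,\mathcal{P}_v(8n-1),
\qquad
\mathcal{P}_v(x)=\sum_{r=0}^v(-1)^r\binom{2v+1}{2r}(2r-3)(2r-1)\,x^r,
\end{equation*}
i.e.\ one multiplies by $(2v+1)!$ rather than $v!$, so that all the $v$-dependence of the $r$th coefficient is absorbed into the integer binomial coefficient $\binom{2v+1}{2r}$ and the constant term is $(-3)(-1)=3$ for \emph{every} $v$. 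Since $8n-1\equiv 7\pmod 8$ while powers of $3$ are $\equiv 1,3\pmod 8$, the integer $8n-1>1$ is never a power of $3$, hence always has a prime factor $l\neq 3$; then $\mathcal{P}_v(8n-1)\equiv 3\not\equiv 0\pmod l$, and the lemma follows for all $(n,v)$ at once, with no residual cases. If you redo your denominator-clearing with $(2v+1)!$ in place of $v!$, your argument turns into the paper's proof.
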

\begin{proof}
From \eqref{evnk}, we have that
$$
\mathcal{E}_v(n,0)=\sum_{\substack{r,s\geq0\\r+s=v}}\frac{(-1)^{r}}{8^v}\, \frac{\Gamma\left(v-\frac{3}{2}\right)\Gamma\left(v+\frac{3}{2}\right)}{r! \,s!\,\Gamma\left(r-\frac{3}{2}\right)\Gamma\left(s+\frac{3}{2}\right)}\,\left(8n-1\right)^{r}.
$$
Substituting the values of gamma function at half-integer points and after simplifying, we obtain
$$
\begin{aligned}
   \mathcal{E}_v(n,0)&=\frac{\Gamma\left(v-\frac{3}{2}\right)\Gamma\left(v+\frac{3}{2}\right)}{2^{v+1}\,\pi}\sum_{r=0}^v\,(-1)^r \frac{\lp 2r-1\rp \lp 2r-3\rp}{(2r)! \lp 2v+1-2r\rp!}\, \lp 8n-1\rp^r\\
   &=\frac{\Gamma\left(v-\frac{3}{2}\right)\Gamma\left(v+\frac{3}{2}\right)}{\lp 2v+1\rp! \,2^{v+1}\,\pi}\cdot \mathcal{P}_v(8n-1),
\end{aligned}
$$
where
$$
\mathcal{P}_v(n):=\sum_{r=0}^v\, (-1)^r\binom{2v+1}{2r} (2r-3)(2r-1)\, n^r
$$
 is a polynomial with integer coefficients and the constant term $3$. Note that $8n-1>1$ for all $n\geq 1$, and by induction argument it follows that there always exists a prime $l$ other than $3$ such that $l|(8n-1)$. Thus we get $\mathcal{P}_v(8n-1)\equiv 3\pmod{l}$, which implies that $\mathcal{E}_v(n,0)$ is non-zero for all $n\geq 1$.
\end{proof}
We now recall the modular transformation property of the Dedekind eta-function
$$
\eta\lp\gamma(z) \rp=\varepsilon(\gamma) (cz+d)^\frac{1}{2}\eta(z) \text{\,\, for\,\,}\gamma=\lp\begin{smallmatrix}
    a& b\\c & d
\end{smallmatrix}\rp\in \mathrm{SL}_2(\mathbb{Z}),
$$
where the multiplier $\varepsilon(\gamma)$ is given by the following formulas:
\begin{equation}\label{multiplier}
\varepsilon(\gamma):=\begin{cases}
    \lp \frac{d}{|c|}\rp\exp\lp \frac{\pi i}{12}\lp c(a+d-3)-bd(c^2-1)\rp \rp & \text{ if \,\,}2\nmid c,\\
   \lp \frac{c}{|d|}\rp\exp\lp \frac{\pi i}{12}\lp c(a-2d)-bd(c^2-1)+3d-3\rp \rp\varepsilon(c,d) & \text{ if \,\,}2\mid c, 
\end{cases}
\end{equation}
where $\lp \frac{c}{d}\rp$ is the Kronecker symbol and $\varepsilon(c,d)=-1$ when $c\leq 0 $ and $d<0$, and $\varepsilon(c,d)=1$ otherwise.

Next we prove an important lemma regarding the modularity of $R_v(z)$, defined in \eqref{Rv}.


\begin{lemma}\label{RC-lemma}
    The Rankin--Cohen bracket $R_v(z):=[\eta(z)^{3},{1}/{\eta(z)^{3}}]_{v}$ is a holomorphic modular form of weight~$2v$ and level $1$.
\end{lemma}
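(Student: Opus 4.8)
The plan is to exhibit $R_v$ as a Rankin--Cohen bracket of two modular objects of opposite weight and conjugate multiplier, apply the standard transformation law for such brackets so that the multipliers cancel, and then check holomorphy at the unique cusp separately.

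First I would record the transformation data of the two ingredients. The function $\eta(z)^3$ is a holomorphic modular form of weight $3/2$ on $\mathrm{SL}_2(\mathbb{Z})$ with multiplier system $\varepsilon^3$, where $\varepsilon$ is the eta-multiplier in \eqref{multiplier}; since $\eta$ is nonvanishing on $\mathbb{H}$, the reciprocal $1/\eta(z)^3$ is holomorphic on $\mathbb{H}$ and transforms with weight $-3/2$ and multiplier $\varepsilon^{-3}$. The key input is the classical fact (see e.g.\ \cite{BFOR,Zagier}) that if $f(\gamma z)=\chi_1(\gamma)(cz+d)^k f(z)$ and $g(\gamma z)=\chi_2(\gamma)(cz+d)^l g(z)$ for all $\gamma=\left(\begin{smallmatrix}a&b\\c&d\end{smallmatrix}\right)\in\mathrm{SL}_2(\mathbb{Z})$, then $[f,g]_v(\gamma z)=\chi_1(\gamma)\chi_2(\gamma)(cz+d)^{k+l+2v}[f,g]_v(z)$; this is a formal consequence of the two transformation laws together with the chain rule, hence is insensitive to $k,l$ being half-integral and to the $\chi_i$ being genuine multiplier systems rather than characters. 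Taking $\{f,g\}=\{\eta^3,1/\eta^3\}$ with weights $\{3/2,-3/2\}$ and multipliers $\{\varepsilon^3,\varepsilon^{-3}\}$ (the orderings in \eqref{Rv} and in the statement differ only by the harmless factor $(-1)^v$, since $[f,g]_v=(-1)^v[g,f]_v$), the product of the multipliers is trivial and the weight is $3/2-3/2+2v=2v$. Moreover $R_v$ is holomorphic on $\mathbb{H}$, since $\eta^3$, $1/\eta^3$, and all their $D$-derivatives are. Thus $R_v$ is a weakly holomorphic modular form of weight $2v$ and trivial multiplier on $\mathrm{SL}_2(\mathbb{Z})$.

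It remains to rule out a pole at $[i\infty]$. Trivial multiplier forces $R_v(z+1)=R_v(z)$, so $R_v$ has a $q$-expansion in integer powers of $q$; to see that no negative powers occur, substitute \eqref{eta3} and \eqref{eta-3}. From $1/\eta(z)^3=q^{-1/8}\sum_{m\ge 0}\mathfrak{p}_3(m)q^m$ and $\eta(z)^3=q^{1/8}\sum_{k\ge 0}(-1)^k(2k+1)q^{T_k}$, the operator $D^r$ (resp.\ $D^s$) multiplies the term of $q$-exponent $m-\tfrac18$ (resp.\ $T_k+\tfrac18$) by $(m-\tfrac18)^r$ (resp.\ $(T_k+\tfrac18)^s$), so every monomial in $D^r(1/\eta^3)\,D^s(\eta^3)$ is a scalar times $q^{(m-1/8)+(T_k+1/8)}=q^{m+T_k}$ with $m+T_k\ge 0$. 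Summing over $r+s=v$ and using $8T_k+1=(2k+1)^2$ to rewrite the scalars reproduces exactly the displayed expansion of $R_v(z)$ in terms of $\mathcal{E}_v(n,k)$; in particular $R_v$ is holomorphic at $i\infty$, so $R_v$ is a holomorphic modular form of weight $2v$ and level $1$.

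The steps are all routine; the only places to be careful are bookkeeping ones, namely that the cubes $\varepsilon^3$ and $\varepsilon^{-3}$ really do cancel, and that the half-integral weights do not produce spurious poles among the $\Gamma$-factors in \eqref{RCB} (they do not, since the arguments $k+v-s=v-\tfrac32-s$ and $l+v-r=v+\tfrac32-r$ are always half-integers, never non-positive integers). One could also bypass the general Rankin--Cohen transformation law and check invariance of $R_v$ directly under the generators $T=\left(\begin{smallmatrix}1&1\\0&1\end{smallmatrix}\right)$ and $S=\left(\begin{smallmatrix}0&-1\\1&0\end{smallmatrix}\right)$: under $T$ the factor $e^{\pi i/4}$ picked up by $\eta^3$ is cancelled by the factor $e^{-\pi i/4}$ picked up by $1/\eta^3$, because $D$ commutes with $z\mapsto z+1$, while invariance under $S$ is precisely the telescoping identity that motivates the definition \eqref{RCB}. (For $v=0$ and $v=1$ the lemma says nothing new: $R_0=1$ and $R_1\equiv 0$, consistent with $M_0(\mathrm{SL}_2(\mathbb{Z}))=\mathbb{C}$ and $M_2(\mathrm{SL}_2(\mathbb{Z}))=0$.)
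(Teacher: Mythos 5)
Your argument is correct, but it establishes the lemma by a route that differs from the paper's. The paper's proof is a one-line reference to the method of Gomez--Ono--Saad--Singh: it rests on the logarithmic-derivative relations $D(\eta(z)^{3})=\tfrac18E_{2}(z)\eta(z)^{3}$ and $D\bigl(1/\eta(z)^{3}\bigr)=-\tfrac18E_{2}(z)/\eta(z)^{3}$, which by induction express $D^{r}(\eta^{\pm3})$ as $\eta^{\pm3}$ times a quasimodular polynomial in $E_{2},E_{4},E_{6}$; in the bracket the $\eta^{\pm3}$ factors cancel, so $R_v$ is exhibited as a polynomial in Eisenstein series, from which both the transformation behavior and holomorphy at the cusp are read off. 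You instead invoke the formal covariance of the Rankin--Cohen bracket under half-integral weights and multiplier systems (which is exactly the paper's Proposition \ref{RC-prop}, with $\varepsilon^{3}\cdot\overline{\varepsilon}^{3}=1$ and weights $\tfrac32-\tfrac32+2v=2v$), and then rule out a pole at the single cusp $[i\infty]$ by the exponent bookkeeping $\bigl(m-\tfrac18\bigr)+\bigl(T_k+\tfrac18\bigr)=m+T_k\ge0$ in the product of the expansions \eqref{eta3} and \eqref{eta-3}; your observations about the $(-1)^v$ reordering between \eqref{Rv} and the lemma's statement, about the $\Gamma$-factors in \eqref{RCB} never hitting non-positive integers, and the sanity checks $R_0=1$, $R_1\equiv0$ are all accurate. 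What each approach buys: yours is shorter and entirely self-contained within the paper (it reuses Proposition \ref{RC-prop} and the same computation that produces the displayed expansion of $R_v$ in terms of $\mathcal{E}_v(n,k)$), while the paper's $E_2$-based route yields an explicit quasimodular/Eisenstein expression for $R_v$, which is the structural viewpoint of the GOSS framework but is not actually needed elsewhere in this paper.
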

\begin{proof}
    The proof is basically paralleled to that of \cite[Theorem~1.1]{GOSS} but uses the relations
    $$
    D\left(\eta(z)^{3}\right)=\frac{1}{8}E_{2}(z)\eta(z)^{3}\quad\mbox{and}\quad D\left(\frac{1}{\eta(z)^{3}}\right)=-\frac{1}{8}E_{2}(z)\frac{1}{\eta(z)^{3}}.
    $$
\end{proof}
We also require some general properties of the Rankin--Cohen bracket.
\begin{proposition}\label{RC-prop}
Suppose that $f$ and $g$ are modular forms on a subgroup $\Gamma$ of $\mathrm{SL}_2(\mathbb{Z})$ with multiplier systems $\varepsilon_f$ and $\varepsilon_g$ respectively. Then the followings are true.

\noindent
(1) We have that $\left[f,g\right]_v$ is modular of weight $k+l+2v$ with multiplier system $\varepsilon_f\varepsilon_g$ on $\Gamma$.

\noindent
(2) Under the modular slash operator, we have
$$
\left[f|_k\gamma,\, g|_l\gamma\right]_v=[f,g]_v\big|_{k+l+2v}\gamma \text{ \quad for all \,}\gamma\in \mathrm{SL}_2(\mathbb{Z}).
$$   
\end{proposition}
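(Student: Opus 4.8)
The plan is to obtain part (1) as a formal consequence of part (2), so that the real content lies in the transformation identity of part (2). The first thing to record is that the bracket $[\,\cdot\,,\,\cdot\,]_v$ in \eqref{RCB} is bilinear in its two arguments: each summand is a constant (depending only on $k,l,v,r,s$) times $D^{r}(f)\cdot D^{s}(g)$, and $D=q\,\frac{d}{dq}$ is $\mathbb{C}$-linear, so a scalar may be pulled out of either slot. Granting (2), for $\gamma\in\Gamma$ we have $f|_k\gamma=\varepsilon_f(\gamma)f$ and $g|_l\gamma=\varepsilon_g(\gamma)g$, whence
\[
[f,g]_v\big|_{k+l+2v}\gamma=[f|_k\gamma,\,g|_l\gamma]_v=\varepsilon_f(\gamma)\varepsilon_g(\gamma)\,[f,g]_v,
\]
which is exactly the assertion that $[f,g]_v$ transforms with weight $k+l+2v$ and multiplier $\varepsilon_f\varepsilon_g$ on $\Gamma$; holomorphy on $\mathbb{H}$ is inherited from $f$ and $g$. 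Thus everything reduces to (2).

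For (2), the key preliminary step is the transformation law of the iterated derivative $D^{r}f$ under the weight-$(k+2r)$ slash, where the slash is defined by $h|_{\kappa}\gamma(z):=j(\gamma,z)^{-\kappa}h(\gamma z)$ with $j(\gamma,z):=cz+d$ and a fixed branch of $j(\gamma,z)^{1/2}$; this step is purely about smooth functions and does not use modularity of $f$. Writing $X:=X(\gamma,z):=\frac{1}{2\pi i}\cdot\frac{c}{cz+d}$, one checks $D\bigl(j(\gamma,z)^{k}\bigr)=k\,X\,j(\gamma,z)^{k}$, $DX=-X^{2}$, and the chain rule $D\!\left[h(\gamma z)\right]=j(\gamma,z)^{-2}(Dh)(\gamma z)$; an induction on $r$ using the product rule then yields
\[
D^{r}(f)\big|_{k+2r}\gamma=\sum_{i=0}^{r}\binom{r}{i}\,\frac{\Gamma(k+r)}{\Gamma(k+r-i)}\,X^{\,i}\,D^{\,r-i}\!\bigl(f|_k\gamma\bigr),
\]
and likewise for $g$ with $k$ replaced by $l$. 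I would substitute both expansions into the definition of $[f|_k\gamma,\,g|_l\gamma]_v$, collect terms by the total power $X^{t}$ they carry, and verify that the coefficient of $X^{t}$ vanishes for every $t\geq 1$ while the coefficient of $X^{0}$ reassembles into $j(\gamma,z)^{-(k+l+2v)}\,[f,g]_v(\gamma z)=[f,g]_v\big|_{k+l+2v}\gamma$. Since $k$ and $l$ enter only through ratios of $\Gamma$-values, the computation is blind to whether they are integral, so it applies verbatim to $k=-\tfrac32$, $l=\tfrac32$ as in \eqref{Rv}, once one branch of $j(\gamma,z)^{1/2}$ is fixed consistently with the multiplier convention in \eqref{multiplier}.

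I expect the only genuinely delicate point to be the bookkeeping in that last step: arranging the resulting double sum so that the cancellation of the anomalous $X^{t}$-terms with $t\geq1$ becomes a single clean hypergeometric (Chu--Vandermonde type) binomial identity in the parameters $k$ and $l$ — indeed this is precisely the identity that forces the coefficients in \eqref{RCB} to be what they are. This verification is classical, and one may quote it directly from \cite{Zagier} or \cite{BFOR}; I would nonetheless isolate the derivative transformation law displayed above, since it is the one place where the failure of $D$ to preserve modularity enters and where the half-integral-weight branch has to be tracked.
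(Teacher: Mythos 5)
Your proposal is correct, and it in fact supplies more than the paper does: the paper's ``proof'' of Proposition \ref{RC-prop} is simply a citation to \cite[Proposition 2.37]{BFOR}, whereas you reconstruct the classical argument that underlies that reference (and \cite{Zagier}). Your reduction of (1) to (2) via bilinearity of \eqref{RCB} and $f|_k\gamma=\varepsilon_f(\gamma)f$, $g|_l\gamma=\varepsilon_g(\gamma)g$ is exactly right, and your derivative transformation law with $X=\tfrac{1}{2\pi i}\tfrac{c}{cz+d}$ is the standard identity (it checks at $r=1$: $D(f|_k\gamma)=-kX\,(f|_k\gamma)+(Df)|_{k+2}\gamma$, which rearranges to your formula, and induction with $DX=-X^2$ gives the general case); importantly, as you note, it is an identity for smooth functions and needs no modularity, so part (2) really does hold for all $\gamma$ once a branch of $j(\gamma,z)^{1/2}$ is fixed, which is all that is needed for the half-integral weights $k=-\tfrac32$, $l=\tfrac32$ in \eqref{Rv}. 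The one step you leave as ``bookkeeping''---the vanishing of the coefficients of $X^t$ for $t\geq 1$, which is the Chu--Vandermonde-type identity characterizing the coefficients in \eqref{RCB}---is precisely the combinatorial heart of the proof; you propose to quote it from \cite{Zagier} or \cite{BFOR}, which is legitimate and is in any case no less detailed than the paper itself, which delegates the entire proposition to the same source. So the two routes differ only in that you make the mechanism explicit (useful if one wants to see where the failure of $D$ to preserve modularity cancels), while the paper buys brevity by citation; to make your sketch a complete self-contained proof you would need to carry out, or explicitly invoke, that binomial cancellation.
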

\begin{proof}
See, e.g. \cite[Proposition 2.37]{BFOR} for details.   
\end{proof}
Then we need to introduce certain harmonic Maass-Poincar\'e series of negative half-integer weight $k$. Suppose that $s\in\mathbb{C}$ with $\mathfrak{R}(s)>1$, $y\in\mathbb{R}\setminus\{0\}$, and let
\begin{align*}
\textbf{M}_{k,s}(y):= {|y|}^{-\frac{k}{2}}M_{\operatorname{sign}(y)\frac{k}{2},s-\frac{1}{2}}(|y|)
\end{align*}
with $M_{\lambda,\mu}$ denotes the usual $M$-Whittaker function (see \cite[Section 13.14]{DLMF}). For convenience, we also define
$$
\phi_s(z):=\textbf{M}_{k,s}( 4\pi y)\, e(x),
$$
where $z=x+iy$ and $e(x):=e^{2\pi ix}$. Suppose that $\Gamma$ is a congruence subgroup of $\mathrm{SL}_2(\mathbb{Z})$ and $[\rho]$ is the cusp satisfying $\rho=\gamma_\rho^{-1}(i\infty)$. If $\Gamma_\rho$ denotes the stabilizer of $[\rho]$ in $\Gamma$ and $\nu$ is a multiplier system, then we define the Poincar\'e series 
$$
P_{[\rho]}\lp z,m,k,s,\Gamma,\nu\rp:=\frac{1}{\Gamma(2-k)}\sum_{\gamma\in \Gamma_\rho\backslash\Gamma}\, \phi_s\lp \lp \frac{-m+\kappa}{t} \rp \gamma_\rho \,z\rp\bigg|_{k,\nu} \gamma,
$$
where $t$ and $\kappa$ are the cusp width and the cusp parameter respectively.

From now on we can restrict to $\Gamma:=\mathrm{SL}_2(\mathbb{Z})$, $\rho:=i\infty$, and $\gamma_\rho:=I_{2\times 2}$. It is known that for $k<0$ and $s=\frac{2-k}{2}$, the Poincar\'e series $P_{[\rho]}\lp z,m,k,\nu\rp$ is indeed a harmonic Maass form of weight $k$ and multiplier $\nu$ on $\Gamma$ (see \cite[Lemma 3.1]{BO}). In the following lemma, we identify $1/\eta(z)^3$ as a harmonic Maass form in terms of Poincar\'e series.

\begin{lemma}\label{main-lemma} With the above setup we have
\begin{align}
    \frac{1}{\eta(z)^3}=P_{[i\infty]}(z,8,-3/2,\overline{\varepsilon}^{3}),
\end{align}
where $\varepsilon$ is the multiplier system defined as in \eqref{multiplier}.
\end{lemma}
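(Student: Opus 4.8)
The plan is to identify both sides as harmonic Maass forms of weight $-3/2$ on $\mathrm{SL}_2(\mathbb{Z})$ with multiplier system $\overline{\varepsilon}^3$, and then show that their difference must vanish. Throughout, $P(z):=P_{[i\infty]}(z,8,-3/2,\overline{\varepsilon}^3)$ is formed with the distinguished parameter $s=\frac{2-k}{2}=\frac{7}{4}$. Since $\eta(z)^3$ transforms with weight $3/2$ and multiplier $\varepsilon^3$ by \eqref{multiplier}, its reciprocal $1/\eta(z)^3$ is a weakly holomorphic modular form of weight $-3/2$ and multiplier $\overline{\varepsilon}^3=\varepsilon^{-3}$, hence a harmonic Maass form whose non-holomorphic part vanishes; and by \cite[Lemma 3.1]{BO}, quoted above, $P(z)$ is a harmonic Maass form of weight $-3/2$ with multiplier $\overline{\varepsilon}^3$ on $\mathrm{SL}_2(\mathbb{Z})$. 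So $f(z):=1/\eta(z)^3-P(z)$ is again a harmonic Maass form of this type.

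First I would match the principal parts at the unique cusp $i\infty$. By \eqref{eta3} and \eqref{eta-3} we have $1/\eta(z)^3=q^{-1/8}\sum_{n\geq 0}\mathfrak{p}_3(n)q^n$, so its principal part is exactly $q^{-1/8}$. For $P(z)$, only the identity coset in $\Gamma_{i\infty}\backslash\mathrm{SL}_2(\mathbb{Z})$ contributes to the principal part — the terms with lower-left entry $c\neq 0$ decay as $y\to\infty$ — leaving the seed $\frac{1}{\Gamma(2-k)}\,\phi_s\!\lp\tfrac{-m+\kappa}{t}z\rp$. With $m=8$ together with the cusp width $t$ and cusp parameter $\kappa$ attached to $\overline{\varepsilon}^3$ one has $\tfrac{-m+\kappa}{t}=-\tfrac18$, and the asymptotics of the $M$-Whittaker function at $+\infty$ give $\phi_{7/4}\!\lp -\tfrac18 z\rp\sim\Gamma(2-k)\,q^{-1/8}$ as $y\to\infty$; the prefactor $1/\Gamma(2-k)=1/\Gamma(7/2)$ then normalizes the principal part of $P(z)$ to be precisely $q^{-1/8}$ as well. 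Hence $f(z)$ has trivial principal part, i.e. $f$ is bounded at $i\infty$.

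To conclude, I would apply $\xi_{-3/2}=2iy^{-3/2}\,\overline{\partial_{\bar z}(\,\cdot\,)}$. Because $f$ is a harmonic Maass form of weight $-3/2$ with multiplier $\overline{\varepsilon}^3$ that is bounded at the cusp, $\xi_{-3/2}(f)$ lies in $S_{7/2}(\varepsilon^3)$, the space of holomorphic cusp forms of weight $\tfrac72$ and multiplier $\varepsilon^3$ on $\mathrm{SL}_2(\mathbb{Z})$. Division by $\eta(z)^3$ embeds this space into $M_2(\mathrm{SL}_2(\mathbb{Z}))$: holomorphy on $\mathbb{H}$ is clear since $\eta$ is zero-free there, and the order at $i\infty$ stays $\geq 0$ because an element of $S_{7/2}(\varepsilon^3)$ vanishes at $i\infty$ to order at least $\mathrm{ord}_{i\infty}\eta^3=\tfrac18$. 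Since $M_2(\mathrm{SL}_2(\mathbb{Z}))=\{0\}$, we get $S_{7/2}(\varepsilon^3)=\{0\}$ and hence $\xi_{-3/2}(f)=0$. Thus $f$ is a weakly holomorphic modular form of weight $-3/2$ which is holomorphic at $i\infty$; then $\eta(z)^3 f(z)\in M_0(\mathrm{SL}_2(\mathbb{Z}))$ vanishes at $i\infty$, so it equals $0$, and therefore $f\equiv 0$, which is the claimed identity.

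The hard part will be the Fourier expansion in the second step: verifying that the $M$-Whittaker normalization inside $\phi_s$, the choice $m=8$ with the cusp width and cusp parameter for $\overline{\varepsilon}^3$, and the factor $1/\Gamma(2-k)$ conspire to produce the principal part $q^{-1/8}$ with coefficient exactly $1$, so that it matches $1/\eta(z)^3$. The remaining ingredients — the transformation law for $f$, the vanishing of $S_{7/2}(\varepsilon^3)$, and the negative-weight rigidity argument — are routine.
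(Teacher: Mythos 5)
Your proposal is correct and takes essentially the same approach as the paper: the paper's proof (deferring to \cite{GOSS}) rests precisely on viewing both sides as harmonic Maass forms of weight $-3/2$ with multiplier $\overline{\varepsilon}^{3}$ and matching their principal parts at $[i\infty]$, with the negative-weight rigidity step you spell out via $\xi_{-3/2}$ being the standard conclusion. The seed normalization you flag as the delicate point is consistent with the paper's own explicit formula $\phi_{7/4}\lp -z/8\rp$ appearing at the start of the proof of Proposition \ref{main-prop}.
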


\begin{proof}
    The argument for this lemma is essentially the same as that of \cite[Proposition~3.4]{GOSS} by showing that both sides viewed as harmonoic Maass forms have identical principal parts at the cusp~$[i\infty]$.
\end{proof}

\begin{proposition}\label{main-prop} Suppose that $f(z)\in S_{2v}(1)$ has real Fourier coefficients. Then the weighted sum $\mathcal{D}_f$ defined as in \eqref{wtsum} can be expressed in terms of the following Petersson inner product 
    \begin{align}
    ||f||\cdot\mathcal{D}_f= \left\langle[1/\eta(z)^3, \eta(z)^3]_{v},\,f(z)\right\rangle.
    \end{align}
\end{proposition}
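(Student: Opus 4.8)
The plan is to follow the template of \cite{GOSS}: realize $1/\eta(z)^3$ as the harmonic Maass--Poincar\'e series supplied by Lemma \ref{main-lemma}, push the Rankin--Cohen bracket inside the defining coset sum, and then unfold the Petersson inner product against $f$.

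First, by Lemma \ref{main-lemma},
\[
\frac{1}{\eta(z)^3}=P_{[i\infty]}\bigl(z,8,-3/2,\overline{\varepsilon}^3\bigr)=\frac{1}{\Gamma(7/2)}\sum_{\gamma\in\Gamma_\infty\backslash\mathrm{SL}_2(\mathbb{Z})}\psi_s\big|_{-3/2,\,\overline{\varepsilon}^3}\gamma,
\]
where $\Gamma_\infty$ is the stabilizer of $i\infty$, the spectral parameter is $s=\tfrac{2-(-3/2)}{2}=\tfrac74$, and $\psi_s$ is the associated Maass seed, which is supported on the single frequency $e(-x/8)$ matching the principal part $q^{-1/8}$ of $1/\eta(z)^3$. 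Since $\eta(z)^3\in S_{3/2}$ satisfies $\eta^3=\eta^3\big|_{3/2,\,\varepsilon^3}\gamma$ for all $\gamma$, bilinearity of the Rankin--Cohen bracket together with the multiplier version of Proposition \ref{RC-prop}(2) and $\overline{\varepsilon}^3\varepsilon^3=1$ give
\begin{align*}
R_v(z)=\bigl[1/\eta^3,\eta^3\bigr]_v
&=\frac{1}{\Gamma(7/2)}\sum_{\gamma\in\Gamma_\infty\backslash\mathrm{SL}_2(\mathbb{Z})}\bigl[\psi_s\big|_{-3/2,\,\overline{\varepsilon}^3}\gamma,\ \eta^3\big|_{3/2,\,\varepsilon^3}\gamma\bigr]_v\\
&=\frac{1}{\Gamma(7/2)}\sum_{\gamma\in\Gamma_\infty\backslash\mathrm{SL}_2(\mathbb{Z})}\bigl[\psi_s,\eta^3\bigr]_v\big|_{2v}\gamma,
\end{align*}
which exhibits $R_v$ as a weight-$2v$, level-$1$ Poincar\'e-type series (in accordance with Lemma \ref{RC-lemma}).

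Next I unfold. Because $f$ is a cusp form, $\overline{f}\,y^{2v}\,\tfrac{dx\,dy}{y^2}$ is $\mathrm{SL}_2(\mathbb{Z})$-invariant and the relevant double integral converges absolutely, so the standard unfolding argument yields
\[
\langle R_v,f\rangle=\frac{1}{\Gamma(7/2)}\int_0^{\infty}\!\!\int_0^{1}\bigl[\psi_s,\eta^3\bigr]_v(z)\,\overline{f(z)}\;y^{2v-2}\,dx\,dy.
\]
Using the product expansion \eqref{eta3} for $\eta^3$, the definition \eqref{RCB} of the bracket (a finite combination of products $D^r(\psi_s)\,D^{v-r}(\eta^3)$), and the elementary identity $D\bigl(g(y)e(\alpha x)\bigr)=\bigl(\tfrac{\alpha}{2}g-\tfrac{1}{4\pi}g'\bigr)e(\alpha x)$, one expands $[\psi_s,\eta^3]_v$ as an absolutely convergent sum over $k\geq0$ of terms of the shape $(-1)^k(2k+1)\,\Phi_{v,k}(y)\,e(T_k x)$, where $\Phi_{v,k}(y)$ is an explicit combination of Whittaker functions and powers of $y$ coming from the iterated operators $D^r$. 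Integrating over $x\in[0,1]$ against $\overline{f(z)}=\sum_{n\geq1}a_f(n)e(-nx)e^{-2\pi ny}$ --- here using that $f$ has real Fourier coefficients, so $\overline{a_f(n)}=a_f(n)$ --- orthogonality of additive characters forces $n=T_k=\tfrac{(2k+1)^2-1}{8}$, hence selects $a_f\bigl(\tfrac{(2k+1)^2-1}{8}\bigr)$ with sign $(-1)^k=\bigl(\tfrac{-4}{2k+1}\bigr)$; moreover the $k=0$ term drops out since $f$ has no constant term. These are exactly the summands of the twisted Dirichlet series $D(f;s)$.

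Finally, the remaining $y$-integral is, term by term, a Mellin transform of an $M$-Whittaker function against $e^{-cy}y^{w}$; evaluating it by the classical closed form (see, e.g., \cite[Chapter~13]{DLMF}) produces ratios of Gamma functions, and expanding the resulting hypergeometric factor as a power series and recombining with the polynomial-in-$1/y$ factors above (whose structure leaves only the effective range $0\leq j\leq v-2$) produces exactly the constants $\widetilde{\mathcal{E}}_v(j,m)$ of \eqref{etilde} and the exponent $2v+2j+2m+2$ on $(2k+1)$. Summing over $k$ identifies $\sum_{k\geq1}\frac{(-1)^k a_f\left(\frac{(2k+1)^2-1}{8}\right)}{(2k+1)^{2v+2j+2m+2}}$ with $D(f;2v+2j+2m+2)$, and collecting everything gives
\[
\langle R_v,f\rangle=\sum_{j=0}^{v-2}\sum_{m\geq0}\widetilde{\mathcal{E}}_v(j,m)\,D(f;2v+2j+2m+2)=||f||\cdot\mathcal{D}_f
\]
by the definition \eqref{wtsum}. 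The main obstacle is precisely this last step: determining the polynomials in $1/y$ produced by iterating $D$ on the Whittaker seed, evaluating the Whittaker--Mellin integral, and matching the resulting tangle of Gamma factors, binomial coefficients, and rising factorials to the closed form \eqref{etilde}; one must also justify interchanging the $k$-sum with the unfolding integral and confirm convergence of the final double series --- which is why $\mathfrak{R}(s)\geq 2v+2$ is built into the definition of $D(f;s)$ (every argument $2v+2j+2m+2$ is then at least $2v+2$, so each $D(f;\cdot)$ converges, and Deligne's bound $a_f(N)\ll N^{v-1/2+\epsilon}$ secures convergence of the sum over $j$ and $m$).
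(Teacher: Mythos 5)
Your outline follows the paper's proof essentially step for step: represent $1/\eta(z)^3$ by the Poincar\'e series of Lemma \ref{main-lemma}, use the slash-compatibility of the Rankin--Cohen bracket (Proposition \ref{RC-prop}) together with the modularity of $\eta(z)^3$ to push the bracket inside the coset sum, unfold the Petersson inner product, and evaluate the resulting Whittaker--Mellin integrals term by term, with the $x$-integration selecting $a_f\bigl(\tfrac{n^2-1}{8}\bigr)$ exactly as in the paper. The step you flag as ``the main obstacle'' is precisely what the paper carries out in Lemmas \ref{L1-lemma}, \ref{L3-lemma} and \ref{L5-lemma}: $D^r(\delta)$ is computed from the Whittaker derivative identity, the $y$-integral is evaluated by \cite[(13.23.1)]{DLMF} as a ${}_2F_1$ at argument $1/n^2$, and this is then rewritten via Euler's transformation and the Pfaff--Saalsch\"utz identity before expanding $\bigl(1-\tfrac{1}{n^2}\bigr)^{1-2v}$ binomially to produce the index $m$. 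One small correction to your sketch: the truncation of the outer sum to $0\le j\le v-2$ is not a consequence of the ``polynomial-in-$1/y$'' structure of the iterated $D$'s, but of the Pfaff--Saalsch\"utz evaluation of the inner $r$-sum (the second identity of Lemma \ref{L5-lemma}), which kills the terms with $j>v-2$.
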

\begin{proof}
From Lemma \ref{main-lemma}, we have that
$$
\frac{1}{\eta(z)^3}=P_{[i\infty]}(z,8,-{3}/{2},\overline{\varepsilon}^{3})=\frac{1}{\Gamma\lp \frac{7}{2}\rp}\sum_{\gamma\in\Gamma_{i\infty} \backslash\Gamma}\phi_{\frac{7}{4}}\lp -\frac{z}{8}\rp\Bigg|_{-\frac{3}{2},\,\overline{\varepsilon}^3}\gamma.
$$
For convenience, we let
$$
\delta(z):=\phi_{\frac{7}{4}}\lp -\frac{z}{8}\rp=e\lp -\frac{x}{8}\rp \cdot\lp \frac{\pi y}{2}\rp^\frac{3}{4} M_{\frac{3}{4},\frac{5}{4}}\lp \frac{\pi y}{2}\rp.
$$
Then we consider
$$
\begin{aligned}
\left[1/\eta^3,\eta^3\right]_v&= \frac{1}{\Gamma\lp \frac{7}{2}\rp}\sum_{\gamma\in\Gamma_{i\infty} \backslash\Gamma}\Bigg[\delta(z)\Big|_{-\frac{3}{2},\,\overline{\varepsilon}^3}\gamma,\,\eta(z)^3\Bigg]_v  \\
&=\frac{1}{\Gamma\lp \frac{7}{2}\rp}\sum_{\gamma\in\Gamma_{i\infty} \backslash\Gamma}\Bigg[\delta(z)\Big|_{-\frac{3}{2},\,\overline{\varepsilon}^3}\gamma,\,\eta(z)^3\Big|_{\frac{3}{2},\,{\varepsilon}^3}\gamma^{-1}\Big|_{\frac{3}{2},\,{\varepsilon}^3}\gamma\Bigg]_v\\
&=\frac{1}{\Gamma\lp \frac{7}{2}\rp}\sum_{\gamma\in\Gamma_{i\infty} \backslash\Gamma}\big[\delta(z),\,\eta(z)^3\big]_v\Bigg|_{2v}\gamma,
\end{aligned}
$$
where the last two steps require the modularity of $\eta(z)^3$ and Proposition~\ref{RC-prop} respectively.

Using the modularity of $f(z)$, we get from the definition of the Peterson inner product
$$
\begin{aligned}
    \left\langle \left[1/\eta^3,\eta^3\right]_v,f\right\rangle&=\frac{1}{\Gamma\lp \frac{7}{2}\rp}\iint_{\Gamma
\backslash\mathbb{H}} \,\,\sum_{\gamma\in\Gamma_{i\infty} \backslash\Gamma}\big[\delta(z),\,\eta(z)^3\big]_v\Bigg|_{2v}\gamma\,\cdot \overline{f}(x+iy)\, y^{2v} \frac{dx dy}{y^2}\\
    &=\frac{1}{\Gamma\lp \frac{7}{2}\rp}\sum_{\gamma\in\Gamma_{i\infty} \backslash\Gamma}\,\iint_{\gamma^{-1}\lp\Gamma
    \backslash\mathbb{H}\rp} \,\,\sum_{\gamma\in\Gamma_{i\infty} \backslash\Gamma}\big[\delta(z),\,\eta(z)^3\big]_v\cdot \overline{f}(x+iy)\, y^{2v} \frac{dx dy}{y^2}.
\end{aligned}
$$
Finally, using the fact that $\Gamma_{i\infty}\backslash \mathbb{H}=\cup_{\gamma\in\Gamma_{i\infty} \backslash\Gamma}\,\,\, \gamma^{-1}\lp\Gamma
\backslash\mathbb{H}\rp$, we obtain
\begin{equation}\label{L1-eqn}
    \left\langle \left[1/\eta^3,\eta^3\right]_v,f\right\rangle=\frac{1}{\Gamma\lp \frac{7}{2}\rp}\int_0^\infty \int_0^1 \left[\delta(z),\, \eta(z)^3\right]_v\,\overline{f}(x+iy)\, y^{2v}\frac{dx dy}{y^2}.
\end{equation}

To compute the double integral in \eqref{L1-eqn}, we first make the integrand explicit. From the definition in \eqref{RCB}, we have that
\begin{equation}\label{L1-eqn'}
\left[\delta(z),\,\eta(z)^3\right]_v=\sum_{\substack{r,s\geq0\\r+s=v}} (-1)^r\, c_{r,s}
\, D^r(\delta(z)) \, D^s(\eta(z)^3),
\end{equation}
where we define for simplicity
$$
c_{r,s}:=\frac{\Gamma\lp v-\frac{3}{2}\rp\Gamma\lp v+\frac{3}{2}\rp}{s!\,r!\,\Gamma\lp r-\frac{3}{2}\rp\Gamma\lp s+\frac{3}{2}\rp}.
$$
From \eqref{L1-eqn'} it clear that we need to compute $D^r(\delta(z))$, which is performed in the following lemma.
\begin{lemma}\label{L1-lemma}
    For $r\geq 0$, we have
$$
D^r\lp \delta(z)\rp=\frac{1}{8^r}\lp \frac{-5}{2}\rp^{\overline{r}}e\lp\frac{-x}{8}\rp\lp \frac{\pi y}{2}\rp^{\frac{3}{4}-\frac{r}{2}} M_{\frac{3}{4}-\frac{r}{2},\frac{5}{4}-\frac{r}{2}}\lp \frac{\pi y}{2}\rp. 
$$
\end{lemma}
\begin{proof}
 This can be easily proved via the derivative identity \cite[(13.15.15)]{DLMF} of $M$-Whittaker function and Leibniz rule. We refer to \cite[Lemmas 3.6 and 3.7]{GOSS} for details of a similar calculation. 
\end{proof}
From the classical identity $\eta(z)^3=\sum_{n\geq 1} \lp \frac{-4}{n}\rp\, n q^{n^2/8}$ (see, e.g. \cite[p. 17]{O}), we find that
\begin{equation}\label{L1-eqn''}
    D^s\lp \eta(z)^3\rp=\frac{1}{8^s}\sum_{n\geq 1}\lp\frac{-4}{n}\rp\, n^{2s+1}  q^{\frac{n^2}{8}}
\end{equation}
Applying \eqref{L1-eqn''} and Lemma \ref{L1-lemma} together in \eqref{L1-eqn'}, we obtain 
$$
\begin{aligned}
\left[\delta(z),\,\eta(z)^3\right]_v=\frac{1}{8^v}\sum_{\substack{r,s\geq0\\r+s=v}} (-1)^r\, c_{r,s}
\,\lp \frac{-5}{2}\rp^{\overline{r}} & e\lp\frac{-x}{8}\rp\lp \frac{\pi y}{2}\rp^{\frac{3}{4}-\frac{r}{2}}\\
&\times  M_{\frac{3}{4}-\frac{r}{2},\frac{5}{4}-\frac{r}{2}}\lp \frac{\pi y}{2}\rp
\sum_{n\geq 1}\lp\frac{-4}{n}\rp\, n^{2s+1}  q^{\frac{n^2}{8}}.
\end{aligned}
$$
Substituting this in \eqref{L1-eqn} and by assumption that $f(z)$ has real Fourier coefficients, we obtain \small
$$
\begin{aligned}
\left\langle \left[1/\eta^3,\eta^3\right]_v,f\right\rangle &=\frac{1}{8^v\,\Gamma\lp \frac{7}{2}\rp} \sum_{\substack{r,s\geq0\\r+s=v}} (-1)^r\, c_{r,s}
\,\lp \frac{-5}{2}\rp^{\overline{r}}\sum_{n\geq 1}\sum_{m\geq 1}\lp\frac{-4}{n}\rp\, n^{2s+1} a_f(m) 
\\
\times\int_0^\infty \int_{0}^ 1 &\lp \frac{\pi y}{2}\rp^{\frac{3}{4}-\frac{r}{2}} M_{\frac{3}{4}-\frac{r}{2},\frac{5}{4}-\frac{r}{2}}\lp \frac{\pi y}{2}\rp \exp{\lp-\frac{\pi y (n^2+8m)}{4}\rp} e{\lp\frac{(n^2-1-8m)x}{8}\rp } y^{2v}\, \frac{dx dy}{y^2}.
\end{aligned}
$$\normalsize
Note that if $(n,4)=1$, then $8|(n^2-1)$, and hence 
$$
\int_0^1 e{\lp\frac{(n^2-1-8m)x}{8}\rp }\, dx=\begin{cases}
    1 & \text{ if } m=\frac{n^2-1}{8}\\
    0 & \text{ otherwise }
\end{cases}.
$$
Thus we obtain \small
\begin{equation}\label{L3-eqn}
    \left\langle \left[1/\eta^3,\eta^3\right]_v,f\right\rangle =\frac{1}{8^v\,\Gamma\lp \frac{7}{2}\rp} \sum_{\substack{r,s\geq0\\r+s=v}} (-1)^r c_{r,s}
\lp \frac{-5}{2}\rp^{\overline{r}}\sum_{n> 1}\lp\frac{-4}{n}\rp n^{2s+1} a_f\lp \frac{n^2-1}{8}\rp \mathcal{I}(r,n),
\end{equation}\normalsize
where
\begin{equation}\label{L3-eqn'}
    \mathcal{I}(r,n):=\int_0^\infty \lp \frac{\pi y}{2}\rp^{\frac{3}{4}-\frac{r}{2}} M_{\frac{3}{4}-\frac{r}{2},\frac{5}{4}-\frac{r}{2}}\lp \frac{\pi y}{2}\rp \exp{\lp-\frac{\pi (2n^2-1)y}{4}\rp} y^{2v-2}\, dy.
\end{equation}
The following lemma gives the evaluation of $\mathcal{I}(r,n)$ in terms of special functions.
\begin{lemma}\label{L3-lemma}
    If $n>1$ and $0\leq r\leq v$, then we have
$$
\mathcal{I}(r,n)=\lp \frac{2}{\pi}\rp^{2v-1}\cdot\frac{\Gamma\lp 2v-r+\frac{3}{2}\rp}{n^{4v-2r+3}} \cdot {}_2F_1 \left(\begin{matrix}
    1& \frac{3}{2}+2v-r& \\
    {} & \frac{7}{2}-r
\end{matrix}|\,\frac{1}{n^2}\right).
$$    
\end{lemma}
\begin{proof}
Making the change of variable $t=\frac{\pi y}{2}$, we get
$$
    \mathcal{I}(r,n)=\lp\frac{2}{\pi}\rp^{2v-1}\int_0^\infty t^{2v-\frac{r}{2}-\frac{5}{4}} M_{\frac{3}{4}-\frac{r}{2},\frac{5}{4}-\frac{r}{2}}\lp t\rp \exp{\lp-\lp n^2-\frac{1}{2} \rp t\rp} dt
$$
Then we evaluate this integral using the identity \cite[(13.23.1)]{DLMF}, which proves the lemma.
\end{proof}
Combining \eqref{L3-eqn} with Lemma \ref{L3-lemma}, we deduce that
\begin{equation}\label{L4-eqn}
     \left\langle \left[1/\eta^3,\eta^3\right]_v,f\right\rangle =\frac{1}{8^v\,\Gamma\lp \frac{7}{2}\rp}\cdot\lp \frac{2}{\pi}\rp^{2v-1}\sum_{n>1} 
     \frac{\lp \frac{-4}{n}\rp \,a_f\lp \frac{n^2-1}{8}\rp}{n^{2v+2}}\cdot \omega_v(n),
\end{equation}
where
\begin{equation}\label{L4-eqn'}
\omega_v(n):=\sum_{r=0}^v (-1)^r\, c_{r,v-r}
\, \lp -\frac{5}{2}\rp^{\overline{r}}\Gamma\lp \frac{3}{2}+2v-r\rp\cdot\, {}_2F_1 \left(\begin{matrix}
    1& \frac{3}{2}+2v-r& \\
    {} & \frac{7}{2}-r
\end{matrix}|\,\frac{1}{n^2}\right).
\end{equation}
Now we aim to express $\omega_v(n)$ in a different form that would be the source of infinite sum of special values of twisted Dirichlet series. For this purpose, we need the following lemma.
\begin{lemma}\label{L5-lemma}
For $v\geq 1$ and $0\leq r\leq v$, we have the following identities:
$$
 {}_2F_1 \left(\begin{matrix}
    1& \frac{3}{2}+2v-r& \\
    {} & \frac{7}{2}-r
\end{matrix}|\,\frac{1}{n^2}\right)=\lp 1-\frac{1}{n^2} \rp^{1-2v}\,\,\sum_{i=0}^{2v-2} \frac{(-1)^i}{n^{2i}} \binom{2v-2}{i}\frac{\lp \frac{5}{2}-r \rp^{\overline{i}}}{\lp \frac{7}{2}-r \rp^{\overline{i}}},
 $$ 
$$
\begin{aligned}
\sum_{i=0}^{2v-2} \frac{(-1)^i}{n^{2i}} \binom{2v-2}{i} &\sum_{r=0}^v (-1)^r\, c_{r,v-r}
\, \lp -\frac{5}{2}\rp^{\overline{r}}\Gamma\lp \frac{3}{2}+2v-r\rp 
 \frac{\lp \frac{5}{2}-r \rp^{\overline{i}}}{\lp \frac{7}{2}-r \rp^{\overline{i}}}\\
 &=\frac{\Gamma\lp v-\frac{3}{2}\rp \Gamma\lp v+\frac{3}{2}\rp}{\Gamma\lp -\frac{3}{2}\rp}\sum_{i=0}^{v-2} \frac{(-1)^i}{n^{2i}} \binom{2v-2}{i}\frac{(v-i-1)^{\overline{v}}\lp\frac{5}{2} \rp^{\overline{i}}}{\lp -\frac{3}{2}-i\rp^{\overline{v}}\lp \frac{7}{2}\rp^{\overline{i}}}.
\end{aligned}
$$ 
\end{lemma}
\begin{proof}
    The proof mainly uses two classical identities for hypergeometric functions due to Euler (see \cite[Theorem~2.2.2]{Special-Functions}) and Pfaff-Saalsch\"utz (see \cite[Theorem 2.2.6]{Special-Functions}). As it can be easily adopted from \cite[Lemmas 3.12 and 3.13]{GOSS}, we leave all the details here.
\end{proof}
Applying the identities from Lemma \ref{L5-lemma} in \eqref{L4-eqn'} successively, we obtain
$$
\begin{aligned}
\omega_v(n)&=\frac{\Gamma\lp v-\frac{3}{2}\rp \Gamma\lp v+\frac{3}{2}\rp}{\Gamma\lp -\frac{3}{2}\rp}\cdot \frac{1}{\lp 1-\frac{1}{n^2}\rp^{2v-1}}\sum_{i=0}^{v-2}\frac{(-1)^i}{n^{2i}} \binom{2v-2}{i}\frac{(v-i-1)^{\overline{v}}\lp\frac{5}{2} \rp^{\overline{i}}}{\lp -\frac{3}{2}-i\rp^{\overline{v}}\lp \frac{7}{2}\rp^{\overline{i}}}\\
&= \frac{\Gamma\lp v-\frac{3}{2}\rp \Gamma\lp v+\frac{3}{2}\rp}{\Gamma\lp -\frac{3}{2}\rp}\sum_{i=0}^{v-2}\sum_{m=0}^\infty\,\frac{(-1)^i}{n^{2i+2m}} \binom{2v-2}{i}\binom{2v+m-2}{m}\frac{(v-i-1)^{\overline{v}}\lp\frac{5}{2} \rp^{\overline{i}}}{\lp -\frac{3}{2}-i\rp^{\overline{v}}\lp \frac{7}{2}\rp^{\overline{i}}}.
\end{aligned}
$$
Finally, substituting the above expression of $\omega_v(n)$ in \eqref{L4-eqn}, we obtain
$$
\begin{aligned}
\left\langle \left[1/\eta^3,\eta^3\right]_v,f\right\rangle &=\sum_{i=0}^{v-2}\sum_{m=0}^\infty \widetilde{\mathcal{E}}_v(i,m)\sum_{n>1} \frac{\lp \frac{-4}{n}\rp\, a_f\lp \frac{n^2-1}{8}\rp}{n^{2v+2i+2m+2}}\\
&= ||f||\cdot \mathcal{D}_f,
\end{aligned}
$$
where $\widetilde{\mathcal{E}}_v(i,m)$ and $\mathcal{D}_f$ are the same as defined in \eqref{etilde} and \eqref{wtsum} respectively. This completes the proof of Proposition \ref{main-prop}.
\end{proof}
\section{Proofs of Theorems \ref{2-col-thm} and \ref{t-reg-thm}}
\begin{proof}[\bf Proof of Theorem \ref{2-col-thm}]
From the generating function of $2$-colored partitions, we have that
$$
\sum_{n\geq 0} \mathfrak{p}_2(n)\, q^n=\frac{1}{(q;q)_\infty^2}=\frac{(q;q)_\infty}{(q;q)_\infty^3}.
$$
Then we recall the following classical identities
$$
(q;q)_\infty=\sum_{j\in\mathbb{Z}} (-1)^j q^{w_j}, \,\quad (q;q)_\infty^3=\sum_{k\in\mathbb{Z}_{\geq 0}} (-1)^k (2k+1)\, q^{T_k}.
$$
Thus we get
\begin{equation}\label{2-col-proof-eq1}
\left(\sum_{n\geq 0} \mathfrak{p}_2(n)\, q^n \right)\left( \sum_{k\in\mathbb{Z}_{\geq 0}} (-1)^k (2k+1)\, q^{T_k}\right)=\sum_{j\in\mathbb{Z}} (-1)^j q^{w_j}.
\end{equation}
Note that the coefficient of $q^n$ on the left-hand side of \eqref{2-col-proof-eq1} is
$$
\mathfrak{p}_2(n)+\sum_{k\in\mathbb{Z}_{\geq 1}} (-1)^{k}(2k+1)\,\mathfrak{p}_2(n-T_k)
$$
and on the other side is equals to $$\begin{cases}
 (-1)^j & \text{if\,\, } n= w_j\\
 0     & \text{ otherwise}. 
\end{cases}$$  
This immediately gives the recursive formula of $\mathfrak{p}_2(n)$.
\end{proof}

\begin{proof}[\bf Proof of Theorem \ref{t-reg-thm}]
    After rewriting the generating function of $p_t(n)$ and using the $q$-series expansion of $(q;q)_\infty$, we get
\begin{equation}\label{reg-proof-eq1}
\left(\sum_{n\geq 0} p_t(n)\, q^n \right)\left( \sum_{k\in\mathbb{Z}} (-1)^k \, q^{w_k}\right)=\sum_{j\in\mathbb{Z}} (-1)^j q^{t w_j}.
\end{equation}
Then the coefficient of $q^n$ on the left-hand side of \eqref{reg-proof-eq1} is given by 
$$
p_t(n)+\sum_{k\in\mathbb{Z}\setminus\{0\}} (-1)^k\, p_t(n-w_k)
 $$
 and on the right-hand side is
 $$\begin{cases}
 (-1)^j & \text{if\,\, } n=t w_j\\
 0     & \text{ otherwise}.
 \end{cases}$$
 Thus, comparing the coefficients on both sides, we obtain the desired recursive formula.
\end{proof}

\section{Proofs of Theorems \ref{3-col-thm1} and \ref{3-col-thm2}}
\begin{proof}[\bf Proof of Theorem \ref{3-col-thm1}]
Since $R_v(z)=\left[1/\eta(z)^3,\,\eta(z)^3\right]_v$, by Lemma \ref{RC-lemma} we get that $R_v(z)$ is a modular form of weight $2v$ and level $1$. For $v\in\{2,3,4,5,7\}$, we have $S_{2v}(1)=\{0\}$, and hence $R_v(z)=\alpha_v \,E_{2v}(z)$ for some constant $\alpha_v\in \mathbb{C}$. Considering the $q$-expansion on both sides and comparing the constant terms, we obtain $\alpha_v=\mathcal{E}_v(0,0)$. The claimed recurrence in part (1) follows after comparing the coefficient of $q^n$ and rearranging the terms.

For $v\in\{6,8,9,10,11,13\}$, we have that $\Delta_{2v}(z)\in S_{2v}(1)$ and $\dim S_{2v}(1)=1$. Thus we get $R_{v}(z)=\alpha_v\, E_{2v}(z)+\beta_v\, \Delta_{2v}(z)$ for some constants $\alpha_v,\beta_v\in\mathbb{C}$. Substituting the $q$-expansions on both sides we see that
$$
\alpha_v=\mathcal{E}_v(0,0),\,\text{ and }\, \beta_v=\frac{4v\, \mathcal{E}_v(0,0)}{B_{2v}}+3\mathcal{E}_v(1,0)-\mathcal{E}_v(1,1).
$$
Then we obtain the claimed recurrence in part (2) by comparing the coefficients of $q^n$.
\end{proof}
\begin{proof}[\bf Proof of Theorem \ref{3-col-thm2}]
For $v\in\mathbb{Z}_{\geq 8}\cup \{6\}$, the space $S_{2v}(1)$ is non-empty. Recall that $\mathfrak{B}_{2v}$ consists of orthogonal basis of normalized Hecke eigenforms of $S_{2v}(1)$. Thus we get that
$$
R_v(z)=\alpha_v\, E_{2v}(z)+\sum_{f\in \mathfrak{B}_{2v}}\frac{\left\langle \left[1/\eta^3,\eta^3\right]_v,\,f\right\rangle}{||f||}\, f(z).
$$
Since Hecke eigenforms have real Fourier coefficients, we apply Proposition \ref{main-prop} so that 
$$
R_v(z)=\alpha_v\, E_{2v}(z)+\sum_{f\in \mathfrak{B}_{2v}}\mathcal{D}_f\cdot f(z).
$$
Then we compare the coefficients of $q^n$ and simply solve for $\mathfrak{p}_3(n)$ to obtain the desired recurrence relations.  
\end{proof}

\end{document}